\def\para#1{\vskip .4\baselineskip\noindent{\bf #1}}
\def\CC{{\mathfrak C}}
\newtheorem{thm}{Theorem}[section]
\newtheorem {asp}{Assumption}[section]
\newtheorem{lm}{Lemma}[section]
\newtheorem{deff}{Definition}[section]
\newtheorem{prop}{Proposition}[section]
\theoremstyle{definition}
\theoremstyle{remark}
\newtheorem{rem}{Remark}
\numberwithin{equation}{section}
\newcommand{\eps}{\varepsilon}
\newcommand{\G}{\mathcal{G}}
\newcommand{\M}{\mathcal{M}}
\newcommand{\F}{\mathcal{F}}
\newcommand{\E}{\mathbb{E}}
\newcommand{\PP}{\mathbb{P}}
\newcommand{\R}{\mathbb{R}}
\newcommand{\abs}[1]{\left\vert#1\right\vert}
\newcommand{\norm}[1]{\left\Vert#1\right\Vert}
\numberwithin{equation}{section}
\newcommand{\wdt}{\widetilde}
\newcommand{\bed}{\begin{displaymath}}
\newcommand{\eed}{\end{displaymath}}
\newcommand{\bea}{\bed\begin{array}{rl}}
\newcommand{\eea}{\end{array}\eed}
\newcommand{\ad}{&\!\!\!\disp}
\newcommand{\aad}{&\disp}
\newcommand{\barray}{\begin{array}{ll}}
\newcommand{\earray}{\end{array}}
\def\disp{\displaystyle}
\newcommand{\1}{\boldsymbol{1}}
\newcommand{\beq}{\begin{equation}}
\newcommand{\eeq}{\end{equation}}
\def\bar{\overline}
\def\hat{\widehat}
\def\a.s{\text{\;a.s.\;}}
\begin{document}
\title{Large Deviations Principles for Langevin Equations in Random Environment and Applications\thanks{This research
was supported in part by the National Science Foundation under grant
DMS-1710827.}}
\author{Nhu N. Nguyen\thanks{Department of Mathematics, University of Connecticut, Storrs, CT
06269,
nguyen.nhu@uconn.edu.} \and George Yin\thanks{Department of Mathematics,
University of Connecticut, Storrs, CT
06269,
gyin@uconn.edu.}}
\maketitle

\begin{abstract}
In contrast to the study of Langevin equations in a homogeneous environment in the literature, the study on Langevin equations in randomly-varying environments is relatively scarce. Almost all the existing works require random environments to have a specific formulation that is
 independent of the systems.
This paper aims to consider large deviations principles (LDPs)
of Langevin equations involving a random environment that is a process taking value in a measurable space and that is allowed to interact with the systems, without specified formulation on the random environment.
Examples and applications to statistical physics are provided.
 Our formulation of the random environment presents the main challenges and requires new approaches.
Our approach stems from
the intuition of the Smoluchowski-Kramers approximation. The techniques developed in this paper focus on the relation between the solutions of the second-order equations and the associate first-order equations.
We obtain the desired LDPs by showing a family of processes enjoy the exponential tightness and local LDPs with an appropriate
rate function.

\medskip

\noindent {\bf Keywords.} Langevin equations,
statistical physics,
large deviations principle,  Smoluchowski-Kramers approximation

\medskip
\noindent{\bf Subject Classification.} 60F10, 60H10, 82C31.

\medskip
\noindent{\bf Running Title.} LDPs of
Langevin equation
in Random Environment

\end{abstract}
\newpage

\section{Introduction}\label{sec:int}
Langevin equations are used to describe the motion
of particles in a fluid due to collisions with the molecules of the fluid; see e.g., \cite{Lag08}, which have been studied intensively in both mathematics and physics communities.
Take for instance,
 small particles
with strong damping \cite{Freidlin}, which are formulated as
$$
\dot x_\eps(t)= b(x_\eps(t))-\frac{\lambda}{\eps} \dot x_\eps(t)+\sqrt\eps\sigma\dot B(t).
$$
By letting $X^\eps_t=x_\eps(t/\eps)$, $w(t)=\sqrt\eps B(t/\eps)$, we obtain the so-called chemical Langevin equation
\begin{equation}\label{cLE}
\eps^2\ddot{X}^{\eps}_t=b(X^{\eps}_t)-\lambda \dot X^{\eps}_t+\sqrt\eps\sigma\dot{w}(t).
\end{equation}
Likewise,
the motion of a small particle with mass $\mu$ in the force field $b(x) +\sqrt \eps\sigma\dot w$ with a friction proportional to the velocity and
the friction coefficient
$\lambda$ is described by the following equation due to the Newton law
\begin{equation*}
\mu\ddot{x}^{\mu,\eps}_t=b(x^{\mu,\eps}_t)-\lambda\dot x^{\mu,\eps}_t+\sqrt\eps\sigma\dot{w}(t).
\end{equation*}
When $\mu=\eps^2$, this equation becomes \eqref{cLE}.
Much effort is devoted to the study of
equation \eqref{cLE} and its applications; see e.g., \cite{Freidlin,CF05,Cheng,Fre04,NY-JMP} and references therein.

While a time-homogeneous environment is usually used with the
force field $b$
not depending on any other random process,  we consider a randomly-varying
environment in this work.
We consider
$$
b(x)\rightsquigarrow b(t,x,\xi),\quad
\lambda\rightsquigarrow \lambda_\eps(t,x),\quad\sigma\rightsquigarrow \sigma_\eps(t,x),
$$
where $\xi$ indicates the random environment, which may or may not interact with the system.
As a consequence,
 equation \eqref{cLE} becomes
\begin{equation}\label{eq:F-setup}
\begin{cases}
\eps^2\ddot{X}^\eps_t=b(t,X^\eps_t,\xi_{t/\eps})-\lambda_\eps(t,X^\eps_t)\dot{X}^\eps_t
+\sqrt{\eps}\sigma_\eps(t,X^\eps_t)\dot w(t),\\
X^\eps_0=x_0\in\R^d,\quad\dot{X}^\eps_0=x_1\in \R^d,
\end{cases}
\end{equation}
where  $w(t)$ is an $m$-dimensional standard Brownian motion and $\dot w(t)$ is its formal derivative, $\xi_t$ is a random process, which may or may not depend on $X_t^\eps,w(t)$ and which takes value in a measurable space $\M$ describing how the status of environment changes randomly in time and state.
The fast scale $\xi_{t/\eps}$ is, in fact, obtained after rescaling $X^\eps_t=x_\eps(t/\eps)$.

Natural and important questions in mathematical physics and statistical mechanics include: What is the asymptotic behavior of $\{(X^\eps_t)\}_{\eps>0}$? Can we obtain an averaging principle for  $\xi$? Can such a second-order system be approximated by the corresponding overdamped system (the Smoluchowski-Kramers approximation)? What is the tail probability of the convergence?
We aim to address these questions by
obtaining a large deviations principle  (LDP for short) for  $\{X^\eps\}_{\eps>0}$ in the space of continuous functions.
Large deviations principles play an important role in
 equilibrium and non-equilibrium statistical mechanics, multi-fractals, and thermodynamic formulation of chaotic systems; see \cite{DS89,DZ98,Tou09} and references therein.

In this paper,  we first establish a LDP of \eqref{eq:F-setup}.
 The result will then be specified under different settings of $\xi_{t/\eps}$ such as diffusion processes, jump processes, and a Markov switching environment.  Applications to mathematical physics and statistical mechanics are then treated.
 The classical Smoluchowski-Kramers approximation
is dealt with in the presence of another random process interacting with the system.

  Let us assume that for each fixed time $t$ and fixed state $X_t^\eps=x$, as $\eps\to 0$, $\xi_{t/\eps}$ (which may depend on both $t$ and $X^t_\eps$) has an invariant measure denoted by $\pi_{t,x}$. 
  Intuitively,
  as $\eps\to0$, the behavior of  equation \eqref{eq:F-setup}  takes 3 phases.
 Phase 1, letting $\eps^2\to0$, \eqref{eq:F-setup} behaves as an overdamped Langevin equation
 $$
 \dot{X}^\eps_t=\frac{b(t,X^\eps_t,\xi_{t/\eps})}{\lambda_\eps(t,X^\eps_t)}
 +\sqrt{\eps}\frac{\sigma_\eps(t,X^\eps_t)}{\lambda_\eps(t,X^\eps_t)}\dot w(t).
 $$
 Phase 2, letting $\eps\to 0$, the ergodicity of $\xi_{t/\eps}$ leads to the approximation
 $$
 \dot{X}^\eps_t=\frac{\bar b(t,X^\eps_t)}{\lambda_\eps(t,X^\eps_t)}
 +\sqrt{\eps}\frac{\sigma_\eps(t,X^\eps_t)}{\lambda_\eps(t,X^\eps_t)}\dot w(t),
 $$
 where $\bar b(t,x):=\int_\M b(t,x,z)\pi_{t,x}(dz)$.
 Phase 3, letting $\sqrt\eps\to0$, the small diffusion presents less influence and the system tends to be concentrated on the averaged system
 $$
 \dot{\bar X}_t=\frac{\bar b(t,\bar X_t)}{\lambda_0(t,\bar X_t)},
 $$
 where $\lambda_0$ is a limit (as $\eps\to 0$) of sequence of functions $\{\lambda_\eps\}$.  Not only 
 our work provides a rigorous analysis for these intuitions, but also show that the tail probability of the convergence is exponentially small under appropriate conditions.

 \para{Related works.}
In mathematical physics and statistical mechanics,
 Langevin equations \cite{Lag08}, and stochastic acceleration  \cite{KP79,KP80} among others, were studied in
 \cite{CF05,Fre04} for the Smoluchowski-Kramers approximation,
 \cite{Freidlin,Cheng}
 for the LDPs and MDPs
 (moderate deviations principles) of Langevin equations in the absence of the random environment, and \cite{NY-JMP} for LDPs of Langevin equations under the random environment given by a Markov switching process taking values in a finite set with a fast jump
 rate.
 However, the studies of the subject involving random environment is still scarce.
 Moreover,  almost all of the existing works requires the random fields be independent of the system and/or have specific formulation.

Because
 much attention has been devoted to the study of
 large deviations principles
 (LDPs) for families of stochastic processes given by first-order stochastic differential equations (SDEs),
  LDPs for the first-order SDEs have been relatively well understood.
 Consider the following SDE
 $$
 dY^\eps(t)=b(t,Y^\eps(t),\xi_{t/\eps})dt
 +\sqrt\eps\sigma(t,Y^\eps(t),\xi_{t/\eps})dw(t),
 $$
 where $w(t)$ is a standard Brownian motion and $\xi_{t/\eps}$ is a random process that may or may not depend on $w(t)$ and $Y^\eps(t)$.
 When $\xi_{t/\eps}$ is deterministic almost surely, the study of large deviations is an
 extension of the Freidlin-Wentzell theory \cite{Fre84}.
 When $\xi_{t/\eps}$ is a random process independent of $B(t)$ and $Y^\eps(t)$,
 the LDPs of $\{Y^\eps\}_{\eps>0}$ has been addressed in \cite{Lip96} for $\xi_{t/\eps}$ being a fast diffusion process having coefficients independent of $Y^\eps(t)$ and driven by another Brownian motion independent of $w(t)$, in \cite{Gui03} for $\xi_{t/\eps}$ being an exponentially erogidic process taking values in a general measurable space, and in \cite{Yinhe} for  $\xi_{t/\eps}$ being a Markovian switching process taking values in a finite set.
 From another angle,
 much effort has been devoted to the study of the coupled system (i.e., $\xi_{t/\eps}$ depending on $Y^\eps(t)$ and $w(t)$). Perhaps one of the natural expressions is to assume $\xi_{t/\eps}$ to be a solution of a fast-varying stochastic differential equation in the setting of fast-slow SDEs. Such cases have been studied in \cite{Ver99,Ver00} for some coupled systems in which some coefficients do not depend on both slow and fast processes,
 and in \cite{Puh16}
 for fully-coupled systems with all coefficients depending on both slow and fast processes and with the driving noises being  correlated.
 Moreover,  fully-coupled systems in which $\xi_{t/\eps}$ being a jump process taking values in a finite set was  considered in \cite{BDG18}.
 However, in contrast to the works on the first-order equations, the study on the second-order equations is still scarce.

\para{Our contributions.}
In this work, we provide a
new approach
compared with \cite{Freidlin,Cheng,NY-JMP} as well as the works for first-order equations, and generalize the results in \cite{Freidlin,CF05,Fre04,NY-JMP} in statistical physics applications.
We study  Langevin equations in a random environment that is not assumed to have a specific form and allowed to interact with the system.
The formulations on state space and the process $\xi_t$ are main
challenges and require careful handling
and new approaches.
Without specific structure of  $\xi_t$, we could not have a
representation formula as in \cite{Bou98,BDM11} for the solution processes. Thus  the weak convergence approach of \cite{Freidlin} and the results in \cite{NY-JMP} are no longer applicable  because we do not assume any specific structure of $\xi_t$ and do not assume the noise moves
much faster.
By establishing the LDP for a Langevin dynamics in random fields,
we provide some insight into the statistical inference for the motions of a net of small particles, which
 is shown to be equivalent to  homogeneous environment obtained by averaging.
This fact plays an important role in practice because, typically, the heterogeneity is often much difficult to analyze and simulate than the homogeneity.
We also generalize the principle of least action
for the  environment with the presence of the heterogeneity.
From a technical point of view,
it is the first work consider the large deviations of a second-order stochastic differential equations in random environment, without specific formulation for the random environment.

\para{Our method and approach.}
Our techniques and method rely on the relation between the solutions of the second-order equations and the associate
first-order equations.
Our approach stems from
the intuition of the Smoluchowski-Kramers approximation.
Our proof of the main results is based on the property that if a family of processes enjoys the exponential tightness and a local LDP with an appropriate
rate function, then it satisfies the LDP with the same rate function.
One of the difficulties stems
from handling the diffusion part of the solution of \eqref{eq:F-setup} with a term $\frac 1{\eps^2}\int_0^t H_\eps(s)ds$, where
$$H_\eps(t):=\sqrt \eps e^{-A_\eps(t)}\int_0^t e^{A_\eps(s)}
\sigma_\eps(s,X^\eps_s)dw(s),\text{ and } A_\eps(t):=\dfrac 1{\eps^2}\int_0^t\lambda_\eps(r,X^\eps_r)dr.$$
The large factor $\frac 1{\eps^2}$ in $\frac 1{\eps^2}\int_0^t H_\eps(s)ds$ requires detailed estimates for $H_\eps(t)$. But we cannot move
  $e^{-A_\eps(t)}$
  inside the stochastic integral in It\^o's sense. But, we do need this random variable ($e^{-A_\eps(t)}$) to balance the large factor $e^{A_\eps(s)}$ inside the stochastic integral.

To prove the exponential tightness, we use an extended Puhalskii's criteria
\cite[Theorem 3.1]{LP92} and
\cite[Remark 4.2]{FeKu}.
The challenge in this part is to estimate $H_\eps(t)$ with high probability, which cannot be handled in It\^o's sense or by  martingale estimates.
 By using regularity of the solution to
 interpret $H_\eps(t)$ in pathwise sense
and its suitable decomposition,
 we are able to obtain desired properties needed for the exponential tightness.
Under the assumption on the local LDP of the family of solutions of associated to the first-order equations (obtained by taking the intuition of the Smoluchowski-Kramers approximation), the family of processes $\{X^\eps\}_{\eps>0}$ satisfies a local LDP.
Here, we need to connect
the solutions of the second-order and the associate first-order equations.
Although we do
not expect they are exponentially equivalent,
we expect that they are exponentially equivalent in a ``local sense".
The challenge here is that we need a term, which leverages the decay (as $\eps\to0$) of the distance between solutions of the second-order and the first-order equations.
By looking at the behavior around a fix function $\varphi$ from  auxiliary frozen systems, we are able to replace $H_\eps$ by a stochastic process, in which we can move the random variable from outside to inside the stochastic integral.
It is also noted that even when these types of stochastic integrals can be understood in It\^o's sense, they are no longer martingales. However, by using
techniques
borrowed from handling stochastic convolution, used in stochastic partial differential equations, we can obtain the desired estimates.

\para{Outline of the paper.} The rest of paper is arranged as follows.
Section \ref{sec:for} formulates the problem and states our main results.
Section \ref{sec:app} is devoted to some specific $\xi$'s, applications, and discussions.
The proofs of main results are given in Section \ref{sec:prof}.
To avoid the interruption, proofs of some technical results needed in the proof of main results are postponed to an appendix.

\section{Formulation and Main Results}\label{sec:for}

We use $\abs{\cdot}$ to denote the Euclid norm for vectors or matrices,
$\langle{\cdot,\cdot}\rangle$ the inner product, and 
$\CC([0,1],\R^d)$ the space of continuous functions on $[0,1]$ endowed with the sup-norm $\|\cdot\|$. Denote by
 $\nabla_t$ and $\nabla_x$  the partial
derivatives with respect to the variables $t$ and $x$, respectively.
We work with $(\Omega,\F,\{\F_t\}_{t\geq 0},\PP)$, a complete filtered probability space with the filtration satisfying the usual condition.
Let $w(t)$ be an $m$-dimensional standard Brownian motion and $\xi(t)$ be a random process that may or may not depend on $w(t)$ and that
take values in a measurable space $\M$.
We use
the letter $C$ with or without subscripts to represent a generic
positive
constant, whose values
may
change for different usage. The letters $\hat C$ and $\wdt C$ with or without subscripts are
constants to be specified later.
The constants $C$, $\hat C$, and $\wdt C$ are independent of $\eps$.
We begin with the following definition; see e.g., \cite{DZ98}.

\begin{deff} {\rm
	A family of stochastic processes $\{Y^\eps\}_{\eps>0}$ in $\CC([0,1],\R^d)$
is said to enjoy the large deviations principle
(LDP) with a
rate function $I$
if the following conditions are satisfied:
	\begin{itemize}
		\item $I:\CC([0,1],\R^d)\to[0,\infty]$ is inf-compact, that is, the
		level sets $\{I(f)\leq L\}$ are compact in $\CC([0,1],\R^d)$ for any $L>0$.
		\item For any open subset $G$ of $\CC([0,1],\R^d)$,
		$$
		\liminf_{\eps\to 0}\eps\log\PP(Y^\eps\in G)\geq -I(G):= -\inf_{f\in G}I(f).
		$$
		\item For any closed subset $F$ of $\CC([0,1],\R^d)$,
		$$
		\limsup_{\eps\to 0}\eps\log\PP(Y^\eps\in F)\leq -I(F):= -\inf_{f\in F}I(f).
		$$
	\end{itemize} }
\end{deff}
Our main goal of this paper is to establish a LDP for the family of processes $\{X^\eps=(X^\eps_t)_{t\in[0,1]}\}_{\eps>0}$, which are solutions of the second-order stochastic differential equations (SDEs) with random environment given by
\begin{equation}\label{qe}
\begin{cases}
\eps^2\ddot{X}^\eps_t=b(t,X^\eps_t,\xi_{t/\eps})-\lambda_\eps(t,X^\eps_t)\dot{X}^\eps_t
+\sqrt{\eps}\sigma_\eps(t,X^\eps_t)\dot w(t),\\
X^\eps_0=x_0\in\R^d,\quad\dot{X}^\eps_0=x_1\in \R^d.
\end{cases}
\end{equation}
With $X^\eps$ denoting
the solution of \eqref{qe}, the
pair
$(X^\eps,p^\eps)$ is the solution of the following system of  first-order SDEs
\begin{equation}\label{pq}
\begin{cases}
\dot{X}^\eps_t=p^\eps_t,\quad X^\eps_0=x_0\in\R^d,\\
\eps^2\dot{p}^\eps_t=b(t,X^\eps_t,\xi_{t/\eps})-\lambda_\eps(t,X^\eps_t)p^\eps_t
+\sqrt{\eps}\sigma_\eps(t,X^\eps_t)\dot w(t),\quad p^{\eps}_0=x_1\in\R^d.
\end{cases}
\end{equation}
For simplicity, we  assume $x_0,x_1$ to be non-random and fixed.
More general cases can be handled similarly; see Remark \ref{rem-3}.
To proceed,
we  make the following assumptions on the coefficients of \eqref{qe}, almost of them are similar to that used in the literature;
 see e.g., \cite{Freidlin,Cheng,NY-JMP}),
 and the assumption on the local LDPs for the corresponding first-order equations.

\begin{asp}\label{asp-1}
{\rm	Suppose that
	\begin{itemize}
		\item	$b(\cdot,\cdot,\cdot):\R_+\times\R^d\times\M\rightarrow\R^d$ is measurable and that there exists a constant $C>0$ satisfying
		$$\abs{b(t,x,\xi)-b(t,y,\xi)}\leq C\abs{x-y}, \ \text{for all}\ t\geq 0, x\in\R^d, \xi\in \M;$$
		\item for each $\eps>0$, $\sigma_\eps(t,x): \R_+\times\R^d \mapsto \R^{d\times m}$
		is continuously differentiable functions with respect to $t$ and $x$ and satisfies
		$$ \limsup_{\eps\to0}\sup_{(t,x)\in\R_+\times\R^d}\|\sigma_\eps(t,x)\|+\|[\sigma_\eps(t,x)]^{-1}\|+\|\nabla_t\sigma_\eps(t,x)\|<\infty;\; \sup_{(t,x)\in\R_+\times\R^d}\|\nabla_x\sigma_\eps(t,x)\|\leq C\eps^2;
		$$
		\item for each $\eps>0$, the mapping $\lambda_\eps(\cdot,\cdot):\R_+\times \R^d\mapsto \R$ 
		twice continuously differentiable functions
		and satisfies
		$$ \limsup_{\eps\to0}\sup_{(t,x)\in\R_+\times\R^d}|\lambda_\eps(t,x)|+|\nabla_t\lambda_\eps(t,x)|<\infty;\;
		\sup_{(t,x)\in\R_+\times\R^d}|\nabla_x\lambda_\eps(t,x)|+|\nabla_{xx}\lambda_\eps(t,x)|\leq C\eps^2;
		$$
		$$\kappa_0 :=\liminf_{\eps\to0}\inf_{(t,x)\in\R_+\times\R^d}\lambda(t,x)>0.$$
	\end{itemize} }
\end{asp}

\begin{asp}\label{asp-2}
	{\rm [Assumptions on the first-order equation]. Assume
 that the family $\{q^\eps=(q^\eps_t)_{t\in[0,1]}\}_{\eps>0}$ of solutions of the following stochastic differential equation
	\begin{equation}\label{qe-000}
	\begin{cases} \dot{q}^\eps_t=\dfrac{b(t,q^\eps_t,\xi_{t/\eps})}{\lambda_\eps(t,q_t^\eps)}+\sqrt{\eps}\dfrac{ \sigma_\eps(t,q_t^\eps)}{\lambda_\eps(t,q_t^\eps)}\dot w(t),\\
	q^\eps_0=x_0\in\R^d,
	\end{cases}
	\end{equation}
	satisfies the local LDP (see Definition \ref{def-localLDP}) in $\CC([0,1],\R^d)$ with a rate function $\hat I(\cdot)$,
and
$\hat I(\varphi)=\infty$ if $\varphi$ is not absolutely continuous.}
\end{asp}

\begin{rem}{\rm
	Equation \eqref{qe-000} is obtained by using the Smoluchowski-Kramers approximation. Intuitively, when $\eps\to0$, the terms being higher-order of $\eps$ in the second-order equation \eqref{qe} converge to $0$ much faster and then stay around $0$ for a long time
(compared with other terms).
	Therefore, roughly, as $\eps\to 0$, equation \eqref{qe}
is approximated by
	$$
	0=b(t,X^\eps_t,\xi_{t/\eps})-\lambda_\eps(t,X^\eps_t)\dot X^\eps_t+\sqrt\eps\sigma_\eps(t,X^\eps_t)\dot w(t).
	$$
	As a result, we obtain \eqref{qe-000}.
}\end{rem}

\begin{rem}{\rm
	As we mentioned briefly in the introduction,
	it is well-known that
Assumption \ref{asp-2} on the local LDPs of the first-order SDEs \eqref{qe-000} is
not restrictive.
The LDPs for the first-order SDEs in random environment are well-established with explicit rate function in the literature
under
 different formulations of different random processes $\xi_t$;
see some examples of our results in Section \ref{sec-exp}.
}\end{rem}

Our main result is
the LDP for the family of solutions of \eqref{qe}.
 This is given in the following theorem.

\begin{thm}\label{thm-main1}
	Under Assumptions {\rm \ref{asp-1}} and {\rm \ref{asp-2}}, the family  of solutions $\{X^\eps\}_{\eps>0}$ of \eqref{qe} satisfies the LDP in $\CC([0,1],\R^d)$ with rate function $\hat I(\cdot)$ in
Assumption \ref{asp-2}.
\end{thm}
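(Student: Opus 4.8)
The plan is to combine two ingredients: \textbf{(a)} exponential tightness of $\{X^\eps\}_{\eps>0}$ in $\CC([0,1],\R^d)$, and \textbf{(b)} a local LDP for $\{X^\eps\}_{\eps>0}$ (in the sense of Definition \ref{def-localLDP}) with rate function $\hat I(\cdot)$. By the standard upgrading result that a local --- equivalently, weak --- LDP together with exponential tightness yields the full LDP with an inf-compact rate function, establishing (a) and (b) proves the theorem. The common starting point is the explicit representation of $p^\eps_t=\dot X^\eps_t$ obtained by solving the linear equation \eqref{pq} with integrating factor $A_\eps(t)=\frac 1{\eps^2}\int_0^t\lambda_\eps(r,X^\eps_r)\,dr$,
\[
p^\eps_t=e^{-A_\eps(t)}x_1+\frac 1{\eps^2}\int_0^t e^{-(A_\eps(t)-A_\eps(s))}b(s,X^\eps_s,\xi_{s/\eps})\,ds+\frac 1{\eps^2}H_\eps(t),\qquad X^\eps_t=x_0+\int_0^t p^\eps_s\,ds.
\]
Since $A_\eps(t)\geq\frac{\kappa_0 t}{2\eps^2}$ for small $\eps$, the initial-velocity term contributes $O(\eps^2)$, and a Laplace-type integration by parts gives $\frac 1{\eps^2}\int_r^t e^{-(A_\eps(s)-A_\eps(r))}\,ds=\frac 1{\lambda_\eps(r,X^\eps_r)}+O(\eps^2)$ uniformly; applying Fubini and stochastic Fubini then rewrites
\[
X^\eps_t=x_0+\int_0^t\frac{b(r,X^\eps_r,\xi_{r/\eps})}{\lambda_\eps(r,X^\eps_r)}\,dr+\sqrt\eps\int_0^t\frac{\sigma_\eps(r,X^\eps_r)}{\lambda_\eps(r,X^\eps_r)}\,dw(r)+E_\eps(t),
\]
which has the form of the first-order equation \eqref{qe-000} satisfied by $q^\eps$, up to a remainder $E_\eps$ that must be controlled.

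For the exponential tightness I would verify Puhalskii's criterion (\cite[Theorem 3.1]{LP92}, \cite[Remark 4.2]{FeKu}): for each $t$, $\{X^\eps_t\}$ is exponentially tight in $\R^d$, and $\lim_{\delta\to0}\limsup_{\eps\to0}\eps\log\PP(\sup_{|t-s|\leq\delta}|X^\eps_t-X^\eps_s|>\eta)=-\infty$ for every $\eta>0$. The bounds of Assumption \ref{asp-1} reduce both to controlling the stochastic terms, and the crux is the $\frac 1{\eps^2}$ factor on $H_\eps$. As $e^{-A_\eps(t)}$ sits outside the integral and $A_\eps$ is path-dependent, $H_\eps$ is not an It\^o integral; I would instead read it pathwise: with $M_\eps(t)=\int_0^t\sigma_\eps(s,X^\eps_s)\,dw(s)$, integration by parts yields
\[
H_\eps(t)=\sqrt\eps\,M_\eps(t)-\frac{\sqrt\eps}{\eps^2}\int_0^t e^{-(A_\eps(t)-A_\eps(s))}\lambda_\eps(s,X^\eps_s)M_\eps(s)\,ds,
\]
so $\sup_{t\leq1}|H_\eps(t)|\leq C\sqrt\eps\,\sup_{t\leq1}|M_\eps(t)|$, and exponential martingale inequalities handle $\sup_t|M_\eps(t)|$. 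For $\frac 1{\eps^2}\int_0^tH_\eps(s)\,ds$, stochastic Fubini gives $\sqrt\eps\int_0^t\sigma_\eps(r,X^\eps_r)g_\eps(r,t)\,dw(r)$ with bounded kernel $g_\eps(r,t)=\frac 1{\eps^2}\int_r^t e^{-(A_\eps(s)-A_\eps(r))}\,ds$; the $t$-dependence of $g_\eps$ kills the martingale property, so modulus-of-continuity estimates come from the factorization (stochastic-convolution) method borrowed from SPDEs.

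For the local LDP I would transfer it from $q^\eps$ by establishing \emph{local} exponential equivalence: for each $\varphi\in\CC([0,1],\R^d)$,
\[
\lim_{\rho\to0}\limsup_{\eps\to0}\eps\log\PP\big(\|X^\eps-q^\eps\|>\rho,\ \|q^\eps-\varphi\|\leq\rho\big)=-\infty,
\]
which with Assumption \ref{asp-2} gives the local LDP for $\{X^\eps\}$ with rate $\hat I$. The obstruction to a \emph{global} comparison is once more $H_\eps$; to recover an It\^o-integrable object near $\varphi$ one freezes the friction coefficient along $\varphi$, replacing $A_\eps$ by the \emph{deterministic} $\bar A_\eps(t)=\frac 1{\eps^2}\int_0^t\lambda_\eps(r,\varphi(r))\,dr$. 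On the event $\{\|q^\eps-\varphi\|\leq\rho\}$ the discrepancy $|A_\eps(t)-\bar A_\eps(t)|$ is small (the $x$-gradients of $\sigma_\eps,\lambda_\eps$ being $O(\eps^2)$ by Assumption \ref{asp-1}), so $e^{-\bar A_\eps(t)}$ may be pulled inside the It\^o integral and standard estimates for $\sqrt\eps\,e^{-\bar A_\eps(t)}\int_0^t e^{\bar A_\eps(s)}(\cdots)\,dw(s)$ apply. A Gronwall argument on $X^\eps_t-q^\eps_t$ --- using the smallness of the $x$-gradients of $\sigma_\eps/\lambda_\eps$ and the decay of $\|E_\eps\|$ on this event, with the compactness of $\varphi([0,1])$ providing uniformity --- then closes the estimate.

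The main obstacle I anticipate is precisely the term $\frac 1{\eps^2}\int_0^tH_\eps(s)\,ds$: the $\eps^{-2}$ amplification demands estimates sharp enough that neither It\^o calculus nor martingale inequalities apply directly to $H_\eps$ (because of the external factor $e^{-A_\eps(t)}$ and the path-dependence of $A_\eps$), so one must simultaneously interpret $H_\eps$ pathwise via integration by parts, use the frozen-coefficient device to restore a genuine It\^o integral in a neighbourhood of each $\varphi$, and bring in stochastic-convolution/factorization bounds to obtain modulus-of-continuity control of the resulting non-martingale. Propagating all of this through the Gronwall comparison of the second- and first-order systems, uniformly in $\eps$ and with exponential probability bounds, is the delicate technical core of the argument.
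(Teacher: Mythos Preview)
Your plan is the paper's plan: prove exponential tightness via the extended Puhalskii criterion and a local LDP by a ``local exponential comparison'' with the first-order process $q^\eps$, starting from the variation-of-constants formula and isolating $H_\eps$ as the crux. Two places where your sketch needs tightening to match what the proof actually requires. First, the kernel $g_\eps(r,t)=\eps^{-2}\int_r^t e^{-(A_\eps(s)-A_\eps(r))}\,ds$ is \emph{not} $\F_r$-adapted (it depends on $X^\eps_u$ for $u\in(r,t]$), so the stochastic-Fubini output is not a genuine It\^o integral and standard SPDE factorization does not apply directly; the paper resolves this, for the modulus-of-continuity part, by freezing $\lambda_\eps$ at $x=0$ (not at $\varphi$, since no $\varphi$ is fixed at that stage), writing $H_\eps(t)=\sqrt\eps\,e^{-M_\eps(t)}h_\eps(t)$ with $\bar A_\eps(t)=\eps^{-2}\int_0^t\lambda_\eps(r,0)\,dr$ deterministic so that $h_\eps$ \emph{is} an It\^o integral for each frozen $t$, and $|M_\eps(t)|\leq C\int_0^t|X^\eps_r|\,dr$ thanks to $|\nabla_x\lambda_\eps|\leq C\eps^2$. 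Second, your local-equivalence display is one-sided; the paper's Proposition~\ref{close} gives both inequalities $\PP(\|X^\eps-\varphi\|<\theta)\geq\PP(\|q^\eps-\varphi\|<\bar\theta_2)-e^{-N/\eps}$ and its reverse (needed for the local upper and lower bounds respectively), and then handles non-absolutely-continuous $\varphi$ (where $\hat I(\varphi)=\infty$) by a separate density-of-smooth-functions argument. With these adjustments your outline coincides with the paper's proof.
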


\noindent{\bf Specification:} Not only
	Theorem \ref{thm-main1} gives us a limit as $\eps\to 0$, but also tail probability estimate of the convergence. Let us assume that $\hat I(\varphi)=0$ has unique solution $\varphi^*$. [If we assume a specific form of $\xi_{t/\eps}$, we can obtain explicit formula for $\hat I$. In such a case,  it can be verified that $\hat I$ verifies the condition assumed in this remark; see Section \ref{sec-exp}.]
	Let $B(\varphi^*)$ be an arbitrary neighborhood of $\varphi^*$ and $B^c(\varphi^*)$ be its completion in $\CC([0,1],\R^d)$.
	We have $\hat I(B^c(\varphi^*)) > 0$. Otherwise, if $\hat I(B^c(\varphi)) = 0$, there exists
	$\{\varphi_k\}_{k=1}^\infty \subset B^c(\varphi^*)$ such that $\lim_{k\to\infty} \hat I(\varphi_k) = 0$. Due to $\hat I$ is good rate function, there exists
	a convergent subsequence (still denoted by $\varphi_k$) and with limit $\varphi^{**}\in B^c(\varphi^*)$.
	Since $\hat I$ is lower semi-continuous, $0 \leq \hat I(\varphi^{**}) = \hat I(\lim_{k\to\infty}\varphi_k) \leq  \lim_{k\to\infty} \hat I(\varphi_k) = 0$. It leads to
	$\hat I(\varphi^{**}) = 0$, which is a contradiction.
	That means $\hat I(B^c(\varphi^*)) > 0$. Thus, from the LDP of $\{X^\eps_t\}_{\eps>0}$, the probability $\PP(X^\eps_t\in B^c(\varphi^*))\approx \exp\{-\frac {\hat I(B^c(\varphi^*))}{\eps}\}$ tends to 0
	exponentially fast.

\begin{rem}\label{rem-3}
{\rm	It will be seen in the proof of Theorem \ref{thm-main1} that the LDP of $\{X^\eps\}_{\eps>0}$ still holds if the initial value $x_1$ depending on $\eps$ (i.e., $x_1=x^\eps_1$) satisfies $\eps x^\eps_1$ being bounded as $\eps\to 0$. For example,
we may replace the initial condition $x_1$  by
$x^\eps_1= {x_1}/{\eps}$, which occurs in some applications to physics after scaling the time.
	In a more general setting, we can also allow both $x_0,x_1$ to be random and depending on $\eps$ as well. To be more specific, we can replace the initial values $x_0,x_1$ by $x_0^\eps,x_1^\eps$ and assume that $\limsup_{\eps\to 0}\eps |x_1^\eps|<\infty$ a.s. and $\{x_0^\eps\}_{\eps>0}$ obeys the LDP (in $\R^d$) with the rate function $I_0$ so that the rate function for \eqref{qe-000} becomes $I_0+\hat I$ in the sense that $(I_0+\hat I)(\varphi)=I_0(\varphi_0)+\hat I(\varphi)$ (see \cite[Section 9]{Puh16}). Then Theorem \ref{thm-main1} still holds with the rate function $I_0+\hat I$.}
\end{rem}

\begin{rem} {\rm
The results presented in Theorems \ref{thm-main1}, as well as others in Section \ref{sec-exp}
can be extended to the space $\CC([0,T],\R^d)$ of
	continuous functions on $[0,T]$ endowed with the sup-norm topology for any $T>0$.
	As a consequence, these LDPs still hold in $\CC([0,\infty),\R^d)$, the space of continuous function on $[0,\infty)$ endowed with the local supremum topology
	defined by the metric
	$$
	\sum_{n=1}^\infty\frac 1{2^n}\left(1\wedge\sup_{t\leq n} |\varphi_t -\psi_t|\right),\quad\forall \varphi,\psi\in\CC([0,\infty),\R^d).
	$$
	This fact follows from the Dawson-G\"artner theorem; see \cite[Theorem 4.6.1]{DZ98}, which states that it is sufficient to check the LDPs in $\CC([0, T],\R^d)$ for any $T$ in the uniform metric.}
\end{rem}

\section{Specifications, Examples,
	and Discussions}\label{sec:app}
In this section, we first provide
several
specifications of the process $\xi_{t/\eps}$. Then we consider some examples in statistical mechanics.

\subsection{Special Cases of $\xi_{t/\eps}$:
	Diffusion, Jump, and Switching Processes}\label{sec-exp}

\noindent
{\bf Diffusion processes.}
We assume the noise process $\xi_t$ is given by a diffusion and then $\xi_{t/\eps}$ is a fast diffusion in $\M=\R^l$. Thus, we consider the second-order system involving fast and slow  processes
\begin{equation}\label{qe-exp-1}
\begin{cases}
\eps^2\ddot{X}^\eps_t=b(t,X^\eps_t,Y^\eps_t)-\lambda_\eps(t,X^\eps_t)\dot{X}^\eps_t
+\sqrt{\eps}\sigma_\eps(t,X^\eps_t)\dot w(t),\\
\disp \dot{Y}^\eps_t=\frac 1{\eps}F(t,X^\eps_t,Y^\eps_t)+\frac 1{\sqrt\eps}G(t,X^\eps_t,Y^\eps_t)\dot {\wdt w}(t),\\
X^\eps_0=x_0\in\R^d,\quad\dot{X}^\eps_0=x_1\in \R^d,\quad Y_0^\eps=y_0\in\R^l,
\end{cases}
\end{equation}
where $Y^\eps_t\in\R^l$,
$F(t,x,y):\R_+\times\R^d\times\R^l\to\R^l$, $G(t,x,y):\R_+\times\R^d\times\R^l\to\R^{l\times n}$ are measurable functions, and
$\wdt w(t)$ is an $n$-dimensional
standard
Brownian motions.
Moreover, we allow $\wdt w(t)$ to be correlated with $w(t)$ and denote its correlation matrix by $\Sigma$ (i.e., $\Sigma$ is a $m\times n$ matrix and its $(i,j)$-th entry is the correlation of the $i$-th component of $w(t)$ and the $j$-th component of $\wdt w(t)$).

Together with  Assumption \ref{asp-1} for the coefficient $b,\lambda,\sigma$, we make the following assumptions for $F,G$ (see \cite{Puh16}), which is much milder  than that of $b,\lambda,\sigma$.
The smoothness conditions on $b,\lambda,\sigma$ have two main purposes. First, the conditions enable us
to treat the stochastic integral, which cannot be estimated as in the case of the first-order SDEs. Second, the conditions are needed to establish a (local sense)
exponential equivalence
of the solutions of the second-order SDEs and its associated first-order SDEs.

\begin{asp}\label{asp-exp-1} {\rm
		The functions $G(t,x,y)$ (as well as $G(t,x,y)[G(t,x,y)]^\top)$ are bounded locally in $(t, x)$ and globally in $y$ and are continuous in
		$(x, y)$.
		The function $F(t,x,y)$ is measurable and locally bounded in $(t,x,y)$ and is
		Lipschitz continuous in $y$ and continuous locally uniformly in $(t, x)$. The functions $F(t,x,y)$ and
		$G(t,x,y)[G(t,x,y)]^\top$ are continuous in $x$ locally uniformly in $t$ and uniformly in $y$. $G(t,x,y)[G(t,x,y)]^\top$
		is
		of class $\CC^1$ in $y$, with the first partial derivatives being bounded and Lipschitz continuous in $y$ locally uniformly in $(t, x)$, and $\text{\rm div}_y G(t,x,y)[G(t,x,y)]^\top$ is continuous in $(x,y)$.
		Moreover, for any $t, N>0$,
		$$
		\lim_{|y|\to\infty}\sup_{s\in[0,t]}\sup_{x\in\R^d:|x|<N}\frac{[F(s,x,y)]^\top y}{|y|^2}<0.
		$$ }
\end{asp}
Let $\lambda_0(t,x),\sigma_0(t,x)$ is the limit of $\lambda_\eps(t,x),\sigma_\eps(t,x)$ in the sense of that
$$
\limsup_{\eps\to0}\sup_{(t,x)\in\R_+\times\R^d}(|\lambda_0(t,x)-\lambda_\eps(t,x)|+|\sigma_0(t,x)-\sigma_\eps(t,x)|)=0.
$$

\begin{asp}\label{asp-exp-12}
	{\rm	The matrix $G(t,x,y)[G(t,x,y)]^\top$ is positive definite uniformly in $y$ and
		locally uniformly in $(t,x)$. Either $\sigma_0(t,x) = 0$ for all $(t,x)\in\R_+\times\R^d$ or the matrix
		$$
		G(t,x,y)[G(t,x,y)]^\top - \sigma_0(t,x)\Sigma[G(t,x,y)]^\top\Big(G(t,x,y)[G(t,x,y)]^\top\Big)^{-1}G(t,x,y)\Sigma^\top[\sigma_0(t,x)]^\top
		$$
		is positive definite uniformly in $y$ and locally uniformly in $(t, x)$.}
\end{asp}

Applying Theorem \ref{thm-main1} and \cite[Corollary 2.1]{Puh16}, we have the following result.

\begin{thm}
	Assume assumptions {\rm\ref{asp-1}}, {\rm\ref{asp-exp-1}}, and {\rm\ref{asp-exp-12}} hold.
	The family of processes $\{X^\eps\}_{\eps>0}$ satisfies the LDP in $\CC([0,1],\R^d)$ with the rate function $I$ given as follows. If $\varphi$ is absolutely continuous and $\varphi_0=x_0$, then
	$$
	\begin{aligned}
	I(\varphi)=\int_0^1 \sup_{\beta\in\R^d} \Bigg(&\beta^\top\dot\varphi_s-\sup_{m\in\mathcal P(\R^l)}\bigg(\beta^\top \int_{\R^l}\frac{b(s,\varphi_s,y)m(y)}{\lambda_0(s,y)}dy+\frac 12 \beta^\top \Big(\int_{\R^l}\frac{\sigma_0(s,y)[\sigma_0(s,y)]^\top}{\lambda_0(s,y)}m(y)dy\Big)\beta\\
	&-\sup_{h\in\CC^1_0(\R^l)}\int_{\R^l}\Big([\nabla h(y)]^\top\Big(\frac 12\text{\rm div}_y(G(s,\varphi_s,y)[G(s,\varphi_s,y)]^\top m(y))\\
	&\hspace{4.3cm}-F(s,\varphi_s,y)m(y)- G(s,\varphi_s,y)\Sigma^\top[\sigma_0(s,y)]^\top\beta m(y)\Big)\\
	&\hspace{3cm}-\frac12 [\nabla h(y)]^\top G(s,\varphi_s,y)[G(s,\varphi_s,y)]^\top [\nabla h(y)]\Big)dy\bigg)\Bigg)ds.
	\end{aligned}
	$$
	Otherwise, $I(\varphi)=\infty.$
	In the above, $\mathcal P(\R^l)$ is the space of probability density functions $m(s)$ in $\R^l$  such that $m \in \mathbb W^{1,1}_{\rm loc}(\R^l)$ and
	$\sqrt m \in \mathbb W^{1,2}(\R^l)$ with $\mathbb W^{1,2}(\R^l)$, $\mathbb W^{1,1}_{\rm loc}(\R^l)$ being Sobolev $($and local Sobolev$)$ spaces with appropriate $($indicated$)$ exponents in $\R^l$,
	$\CC^1_0(\R^l)$ is the space of continuously differentiable functions with compact support in $\R^l$.
\end{thm}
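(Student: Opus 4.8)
The plan is to reduce everything to two ingredients: the transfer theorem already proved in the paper, Theorem~\ref{thm-main1}, and the large deviations principle for fully-coupled first-order fast--slow diffusions established in \cite[Corollary 2.1]{Puh16}. Since Assumption~\ref{asp-1} is a standing hypothesis, the only thing that has to be checked is Assumption~\ref{asp-2}, namely that the first-order equation \eqref{qe-000} associated with \eqref{qe-exp-1} satisfies the local LDP in $\CC([0,1],\R^d)$ with exactly the rate function $I$ displayed in the statement, with $I(\varphi)=\infty$ off the absolutely continuous paths issued from $x_0$. Once that is done, Theorem~\ref{thm-main1} gives the conclusion with $\hat I=I$ with no further work.

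First I would make the identification $\xi_{t/\eps}=Y^\eps_t$, under which \eqref{qe-000} for the system \eqref{qe-exp-1} becomes the fully-coupled fast--slow pair $(q^\eps,\bar Y^\eps)$,
\[
\dot q^\eps_t=\frac{b(t,q^\eps_t,\bar Y^\eps_t)}{\lambda_\eps(t,q^\eps_t)}+\sqrt\eps\,\frac{\sigma_\eps(t,q^\eps_t)}{\lambda_\eps(t,q^\eps_t)}\dot w(t),\qquad
\dot{\bar Y}^\eps_t=\frac 1\eps F(t,q^\eps_t,\bar Y^\eps_t)+\frac 1{\sqrt\eps}G(t,q^\eps_t,\bar Y^\eps_t)\dot{\wdt w}(t),
\]
with $w$ and $\wdt w$ correlated through $\Sigma$. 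This is precisely the class of systems covered by \cite[Corollary 2.1]{Puh16}, with slow drift $b/\lambda_\eps$, slow diffusion $\sigma_\eps/\lambda_\eps$, and fast coefficients $F,G$.

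Next I would verify, one at a time, the hypotheses of \cite[Corollary 2.1]{Puh16}. By Assumption~\ref{asp-1} the slow drift $b/\lambda_\eps$ is globally Lipschitz in the slow variable, the slow diffusion $\sigma_\eps/\lambda_\eps$ is bounded, continuously differentiable and uniformly nondegenerate (using $\kappa_0>0$ and the invertibility of $\sigma_\eps$), and both coefficients converge uniformly on $\R_+\times\R^d$, as $\eps\to0$, to $b/\lambda_0$ and $\sigma_0/\lambda_0$; this is the limiting-coefficient requirement. Assumption~\ref{asp-exp-1} supplies the measurability, local boundedness, Lipschitz/continuity structure and the $\CC^1$-regularity of $GG^\top$ needed for the fast component, together with the dissipativity condition $\limsup_{|y|\to\infty}[F(s,x,y)]^\top y/|y|^2<0$, which is exactly what produces the unique invariant density of each frozen fast diffusion and the exponential tightness of $\{\bar Y^\eps\}$. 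Assumption~\ref{asp-exp-12} furnishes Puhalskii's two nondegeneracy conditions: positive definiteness of $GG^\top$ and positive definiteness of the Schur-complement matrix $GG^\top-\sigma_0\Sigma G^\top(GG^\top)^{-1}G\Sigma^\top\sigma_0^\top$, which governs the joint noise and makes the rate function finite on absolutely continuous paths. With all hypotheses in force, \cite[Corollary 2.1]{Puh16} yields the LDP — hence in particular the local LDP — for the slow component $\{q^\eps\}$ in $\CC([0,1],\R^d)$ with the good rate function written in the statement, in which $\sup_{m\in\mathcal P(\R^l)}$ ranges over invariant densities of the $\beta$-controlled frozen fast diffusion and the innermost $\sup_{h\in\CC^1_0(\R^l)}$ is the Donsker--Varadhan penalty enforcing that constraint; the value $+\infty$ for non-absolutely-continuous $\varphi$ (or $\varphi_0\neq x_0$) is part of the same statement. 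This is Assumption~\ref{asp-2} with $\hat I=I$, so Theorem~\ref{thm-main1} applies and delivers the LDP for $\{X^\eps\}$, the slow component of \eqref{qe-exp-1}, with rate function $I$.

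The main obstacle I anticipate is the bookkeeping in matching hypotheses at the interface between the two cited results: confirming that the correlated-noise nondegeneracy in Assumption~\ref{asp-exp-12} is verbatim the condition required in \cite{Puh16}, and ensuring that the local LDP fed into Assumption~\ref{asp-2} is genuinely the one for the coupled pair $(q^\eps,\bar Y^\eps)$ above (with $q^\eps$, not a frozen input, driving the fast dynamics) rather than for a system with a different slow input; one must also read $\lambda_0$ in the rate function as $\lambda_0$ evaluated along the slow path, consistently with Assumption~\ref{asp-1}.
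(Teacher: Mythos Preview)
Your proposal is correct and follows exactly the paper's approach: the paper's proof is the single sentence ``Applying Theorem~\ref{thm-main1} and \cite[Corollary 2.1]{Puh16}, we have the following result,'' and you have spelled out precisely that reduction, checking that Assumptions~\ref{asp-1}, \ref{asp-exp-1}, \ref{asp-exp-12} feed into Puhalskii's hypotheses so that the first-order system satisfies Assumption~\ref{asp-2} with $\hat I=I$. Your closing caveat about whether the fast component in \eqref{qe-000} should be driven by $q^\eps$ or by the original $X^\eps$ is a fair point of bookkeeping, but the paper does not address it either and treats the identification as immediate.
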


\

\para
{Jump processes.}
Here,
we assume $\xi_t$ is a jump process taking finite values, which depends on the slow process as well.
To be more precise, assume $\M=\{1,\dots,|\M|\}$ is a finite set.
Similar to \cite{BDG18}, the evolution of the jump fast component is constructed through a jump intensity function $c(x,y)=c_y(x) : \R^d \times\M\to [0, \infty)$ and a transition probability function $r(x,y,y')=r_{yy'}(x) : \R^d\times\M \times \M \to [0, 1]$ as follows.

Assume that for all $(x,y)\in\R^d\times\M$, $\sum_{y'\in\M} r_{yy'}(x)=1, r_{yy}(x) = 0.$
Let $\zeta=\sup_{(x,y)\in\R^d\times\M}c_y(x)+1$,
$E_{yy'}(x)=[0,c_y(x)r_{yy'}(x)]$ for all $(x,y,y')\in\R^d\times\M\times\M$, $y\neq y'$, and
$\mathbb T =: \{(y,y') \in\M\times\M : r_{yy'}(x) > 0$  for some $x \in\R^d\}$.
For
$(i,j)\in\mathbb T$, let $\bar N_{ij}$ be a Poisson random measure on $[0; \zeta] \times [0,T] \times \R_+$ with intensity measure $\mu_{\zeta}\otimes \mu_T \otimes \mu_{\infty}$, where $\mu_T$ and $\mu_\infty$ denote the Lebesgue measures on $[0, T]$
and $\R_+$, respectively such that
for $t\in[0,T]$,
$$
\bar N_{ij}(A \times [0, t] \times B) - t\mu_\zeta(A)\mu_\infty(B)
$$
is an $\F_t$-martingale for all $A\in \mathcal B[0,\zeta]$ and $B\in \mathcal B(\R_+)$ with $\mu_\infty(B) < 1$.
Then, we define
$$
N^{\eps^{-1}}
_{ij} (dr\times dt) = \bar N_{ij}(dr \times dt \times [0,\eps^{-1}])
$$
a Poisson random measure on $[0, \zeta]\times [0, T]$ with intensity measure $\eps^{-1}\mu_\zeta\otimes\mu_T$.
The processes $(N^{\eps^{-1}}_{ij})_{(i,j)\in\mathbb T}$ are taken to be mutually independent. We assume that for $0 \leq s \leq t \leq T$,
$$
\{w(t)-w(s); N^{\eps^{-1}}_{ij}(A \times (s; t] \times B): A \in \mathcal B[0,\zeta], B \in\mathcal B(\R_+), (i,j)\in
\mathbb T\}
$$
is independent of $\F_s$.
Now, we consider the following system
\begin{equation}\label{qe-exp-2}
\begin{cases}
\eps^2\ddot{X}^\eps_t=b(X^\eps_t,Y^\eps_t)-\lambda\dot{X}^\eps_t
+\sqrt{\eps}\sigma\dot w(t),\\
dY^\eps_t=\sum_{(i,j)\in\mathbb T}\int_{r\in[0,\zeta]}(j-i)\1_{\{Y^\eps(t-)=i\}}\1_{E_{ij}(X^\eps_t)}(r)N_{ij}^{\eps^{-1}}(dr\times dt),\\
X^\eps_0=x_0\in\R^d,\quad\dot{X}^\eps_0=x_1\in \R^d,\quad Y_0^\eps=y_0\in\M.
\end{cases}
\end{equation}
According to \cite{BDG18}, we make following assumption for the jump process and construct the rate function as follows.

\begin{asp}\label{asp-exp-2}
	{\rm	
		Function $c$ is bounded and
		there exists finite constant $C>0$ such that for all $y,y'\in \M$ and $x_1, x_2\in \R^d$,
		$$
		|c_{y}(x_1) - c_y(x_2)|+|r_{yy'}(x_1) - r_{yy'}(x_2)|
		\leq C|x_1 - x_2|.
		$$
		Moreover,
		$$	 \inf_{x\in\R^d}\min_{y,z\in\M}\sum_{n=1}^{|\M|}r^n_{yz}(x)>0,\quad \inf_{x\in\R^d}\min_{y\in\M}c_y(x)>0,\quad
		\inf_{x\in\R^d}\min_{(y,y')\in\mathbb T}r_{yy'}(x)>0.
		$$
}\end{asp}

For $\psi=(\psi(j))_{j\in\M}$, with $\psi_j:[0, \zeta] \to \R_+$ being a measurable map for every $j$, define
$$
\Phi^{\psi}_{ij}(x) =
\begin{cases}
\int_{E_{ij}(x)}\psi_j(z)\mu_\zeta d(z),\text{ if }i\neq j,\\
-\sum_{y:y\neq j}\Phi^{\psi}_{jy}(x),\text{ if } i=j,
\end{cases}
$$
and
$$
\mathcal R =\{v=(v_{ij})_{(i,j)\in\mathbb T}, v_{ij}: [0,1]\times[0,\zeta]\to\R_+\text{ is measurable for all } (i,j)\in\mathbb T\}.
$$
For $\varphi\in C([0,1],\R^d)$, let $\mathcal V(\varphi)$ be the collection of all
$$
\Big(u=(u_i),v=(v_{ih}),\pi=(\pi_i)\Big)\in\mathbb M([0,1]:\R^m)^{|\M|}\times\mathcal R\times \mathbb M([0,1]:\mathcal P(\M)),
$$
[where $\mathbb M([0, 1] : \mathcal P(\M))$, $\mathbb M([0,T] : \R^d)$ denote the space of measurable maps from
$[0, 1]$ to $\mathcal P(\M)$ and from $[0,1]$ to $\R^d$, respectively, with $\mathcal P(\M)$ being the space of probability measures on $\M$ equipped with the topology of weak convergence], such that $\int_0^1\|u_i(s)\|^2\pi_i(s)ds<\infty$ for each $i\in\M$, and
$$
\varphi_t=x_0+\sum_{j\in\M}\int_0^t \frac{b(\varphi_s,j)}{\lambda}\pi_j(s)ds+\sum_{j\in\M}\frac{\sigma u_j(s)\pi_j(s)}{\lambda}ds,
$$
and
$$
\sum_{j\in\M}\pi_j(s)\Phi^{v_{j\cdot}(s,\cdot)}_{ji}(\varphi_s)=0, a.e.\; s\in[0,1], \forall i\in\M.
$$
Combining Theorem \ref{thm-main1} and \cite{BDG18} yields the following result.

\begin{thm}
	Let assumptions {\rm\ref{asp-1}} and  {\rm\ref{asp-exp-2}} hold.
	Then the family of processes $\{X^\eps\}_{\eps>0}$ satisfies the LDP in $\CC([0,1],\R^d)$ with the rate function $I$ given by
	\begin{equation}\label{eq-exp2-rate}
	I(\varphi)=\displaystyle\inf_{(u,v,\pi)\in\mathcal V(\varphi)}\Bigg\{\sum_{i\in\M}\frac 12\int_0^1\|u_i(s)\|^2\pi_i(s)ds+\sum_{(i,j)\in\mathbb T}\int_{[0,\zeta]\times[0,1]}\ell(v_{ij}(s,z))\pi_i(s)\mu_\zeta(dz)ds\Bigg\},
	\end{equation}
	where $\ell(x) = x \ln x - x + 1$.
\end{thm}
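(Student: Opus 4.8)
The plan is to obtain this theorem as a specialization of Theorem~\ref{thm-main1}: we identify the random environment with the fast jump process, $\xi_{t/\eps}=Y^\eps_t$, where $Y^\eps$ solves the second equation of \eqref{qe-exp-2}, and then verify Assumptions~\ref{asp-1} and~\ref{asp-2} for this system. Once both are in force, Theorem~\ref{thm-main1} delivers the LDP for $\{X^\eps\}_{\eps>0}$ with the rate function of the associated first-order system, and it remains only to recognize that rate function as $I(\cdot)$ in \eqref{eq-exp2-rate}.

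First I would check Assumption~\ref{asp-1}. In \eqref{qe-exp-2} the coefficients $\lambda_\eps\equiv\lambda$ and $\sigma_\eps\equiv\sigma$ are constant in $(t,x)$ and independent of $\eps$, so $\nabla_t\sigma_\eps=\nabla_x\sigma_\eps=0$, $\nabla_t\lambda_\eps=\nabla_x\lambda_\eps=\nabla_{xx}\lambda_\eps=0$, the required $\eps^2$-bounds and the uniform boundedness of $\sigma_\eps,\sigma_\eps^{-1},\lambda_\eps$ are immediate, and $\kappa_0=\lambda>0$. The only substantive item is the uniform-in-$\xi$ Lipschitz property of $b$: since $\M$ is finite and $b(\cdot,j)$ is Lipschitz for each $j\in\M$, the bound $|b(x,j)-b(x',j)|\le C|x-x'|$ holds with a constant independent of $j$. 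Hence Assumption~\ref{asp-1} is satisfied.

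Next — and this is the heart of the matter — I would establish Assumption~\ref{asp-2} by invoking \cite{BDG18}. The first-order system \eqref{qe-000} associated with \eqref{qe-exp-2} reads $\dot q^\eps_t=\lambda^{-1}b(q^\eps_t,Y^\eps_t)+\sqrt\eps\,\lambda^{-1}\sigma\,\dot w(t)$ coupled with the jump dynamics for $Y^\eps$, which is exactly a fully-coupled slow--fast system with small Brownian noise in the slow variable and a fast finite-state jump process driven by the $\eps^{-1}$-accelerated Poisson measures $N^{\eps^{-1}}_{ij}$ of the type analyzed in \cite{BDG18}. Its hypotheses are met: the boundedness and Lipschitz continuity of $c$ and $r$ together with the irreducibility-type lower bounds in Assumption~\ref{asp-exp-2} are precisely the conditions imposed there, while the global Lipschitz continuity of the slow drift $\lambda^{-1}b$ follows from Assumption~\ref{asp-1}. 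Therefore \cite{BDG18} yields the (full) LDP for $\{q^\eps\}$ in $\CC([0,1],\R^d)$ with a good rate function, obtained as an infimum over control processes $(u,v,\pi)$ — the Brownian control, the Poisson control, and the limiting occupation measure of the fast chain — subject to the flow equation for $\varphi$ and the stationarity constraint on $\pi$; one checks that the normalizing factors $\lambda^{-1}$ on the drift and on $\sigma$ enter the admissible set exactly as in the definition of $\mathcal V(\varphi)$, so this rate function coincides with $I(\cdot)$ in \eqref{eq-exp2-rate}. In particular $I(\varphi)=\infty$ whenever $\varphi$ is not absolutely continuous, since membership in $\mathcal V(\varphi)$ forces $\dot\varphi$ to exist a.e.\ and be integrable. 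Because a full LDP with a good rate function entails the local LDP of Definition~\ref{def-localLDP}, Assumption~\ref{asp-2} holds with $\hat I=I$.

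Finally, with Assumptions~\ref{asp-1} and~\ref{asp-2} verified, Theorem~\ref{thm-main1} gives the LDP for $\{X^\eps\}_{\eps>0}$ in $\CC([0,1],\R^d)$ with rate function $\hat I=I$ of \eqref{eq-exp2-rate}, which is the claim. I expect the only genuinely delicate point to be bookkeeping: translating the hypotheses and the explicit rate-function formula of \cite{BDG18} into the present notation — in particular, confirming that the $\eps^{-1}$ time-scaling of the $N^{\eps^{-1}}_{ij}$ matches the $\xi_{t/\eps}$ acceleration required by \eqref{qe-000}, and that \cite{BDG18} invokes no regularity on the slow drift beyond what Assumptions~\ref{asp-1} and~\ref{asp-exp-2} supply.
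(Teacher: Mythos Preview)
Your proposal is correct and follows exactly the paper's own approach: the theorem is stated as an immediate consequence of combining Theorem~\ref{thm-main1} with the LDP from \cite{BDG18}, and your write-up simply spells out the verification of Assumptions~\ref{asp-1} and~\ref{asp-2} that this combination requires. If anything, you have supplied more detail than the paper, which dispatches the result in a single sentence.
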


Note that
another representation for the rate function $I$ can be found in \cite[(2.18)]{BDG18}.

\

\para{Markov chains.}
We consider $\xi_{t/\eps}=\alpha^\eps_t$ to be a Markov
switching process independent of the Brownian motion $w(t)$ taking value in a finite state space $\M$ such that $\alpha^\eps_t$ has generator $Q(t)/\eps$ with
$Q(t)\in \R^{|\M|\times|\M|}$ being a generator of a continuous-time, irreducible
Markov chain.
Consider the following system
\begin{equation}\label{eq-exp-3}
\begin{cases}
\eps^2\ddot{X}^\eps_t=b(t,X^\eps_t,\alpha^\eps_t)-\lambda_\eps(t,X^\eps_t)\dot{X}^\eps_t
+\sqrt{\eps}\sigma_\eps(t,X^\eps_t)\dot w(t),\\
X^\eps_0=x_0\in\R^d,\quad\dot{X}^\eps_0=x_1\in \R^d.
\end{cases}
\end{equation}
As stated in Theorem \ref{thm-main1}, the family of solutions of \eqref{eq-exp-3} satisfies the LDP as well.
The rate function can be established as follows (see e.g., \cite[Theorem 4.3] {Yinhe})\footnote{Another possible approach is to specialize the rate function from \eqref{eq-exp2-rate} after formulating the Markovian switching in sense of jump process}:
$$
I(\varphi)=
\begin{cases}
\displaystyle\int_0^1L(\varphi_s,\dot\varphi_s,s)ds\text{ if }\varphi \text{ is absolutely continuous},\\
\infty, \text{ otherwise}.
\end{cases}
$$
In the above, $L(x,\gamma,s)$ is the Fenchel-Legendre transform of the $H$-functional, i.e.,
$$
L(x,\gamma,s):=\sup_{\beta\in\R^d}[\langle \gamma,\beta\rangle -H(x,\beta,s)]
$$
and
$H(x,\beta,s)$ is the function such that (see e.g., \cite[Lemma 4.1]{Yinhe} or \cite[Theorem 1]{Ver00} for the proof of its existence and properties)
$$
\lim_{\eps\to 0}
\eps \log \E_i \exp \Bigg\{\frac 1\eps \int_0^T \bigg(\frac{[\beta_s]^\top b(s,\varphi_s,\beta_s)}{\lambda_\eps(s,\varphi_s)}+\frac{|[\sigma_\eps(s,\varphi_s)]^\top\beta_s|^2}{2\lambda^2_\eps(s,\varphi_s)} \bigg) ds\Bigg\}= \int_0^T H(\varphi_s, \beta_s, s)ds
$$
for any step functions $\varphi_s$ and $\beta_s$ in $\R^d$ and $\E_i$ indicates the expectation with respect to the initial value $\alpha^\eps(0)=i$.

\subsection{Statistical Mechanics Examples}

In this section, we consider some
examples
in
statistical mechanics of small particles in a random environment, which generalizes that
of \cite{Freidlin,Cheng,NY-JMP}.
Let us assume that $\xi_{t}$ is a ergodic process, which may depend on $X_t^\eps$, and assume for each fixed $t$ and fixed state $X^\eps_t=x$, $\xi_{t/\eps}$, as $\eps\to0$, has invariant measure denoted by $\pi_{t,x}$.

\para{Smoluchowski-Kramers approximation.} Consider an overdampped approximation
\begin{equation}\label{oLE-4}
\dot{q}^\eps_t=\frac{b(t,q^\eps_t,\xi_{t/\eps})}{\lambda_\eps(t,q^\eps_t)}
+\sqrt{\eps}\frac{\sigma_\eps(t,q^\eps_t)}{\lambda_\eps(t,q^\eps_t)}\dot w(t).
\end{equation}
In homogeneous environment, it is well-known that the Langevin equation can be simplified to the overdampped approximation,
which is also commonly referred to as a
Smoluchowski-Kramers approximation \cite{KN91}.
Our result in Proposition \ref{close}, one of main steps in the proof of our main result, given in Section \ref{sec:local} is a generalization of this classical result to the case of presence of another interacting random process.

\para{Principle of Least Action.} As seen in Section \ref{sec-exp}, under certain conditions,
the family of solutions $\{q^\eps\}_{\eps>0}$ of overdampped approximations \eqref{oLE-4} satisfies the LDP with the rate function denoted by $\hat I(\cdot)$, which can be given explicitly as in Section \ref{sec-exp} depending the formulation of $\xi_{t/\eps}$.
Applying our results, the solution $\{X^\eps\}_{\eps>0}$ of \eqref{qe}
satisfies the LDP
with the same rate function $\hat I$.
Theorem \ref{thm-main1}
shows that
\begin{equation}
\begin{aligned}
-\inf_{\varphi\in B^\circ}\hat I(\varphi)\leq \liminf_{\eps\to 0}\eps\log \PP\{X^\eps\in B\}
\leq \limsup_{\eps\to 0}\eps\log \PP\{X^\eps\in B\}
\leq -\inf_{\varphi\in\bar B}\hat I(\varphi),
\end{aligned}
\end{equation}
where $B^\circ$ and $\bar B$ denote the interior and closure of $B$ in $\CC([0,1],\R^d)$.

If we let $P_\eps[X]$ be the probability density functional or
law over  different trajectories $\{X^\eps\}_{\eps>0}$
in a given time interval $[0, 1]$. Then a LDP for the random paths $\{X^\eps\}_{\eps>0}$  indicates that
$$
P_\eps[\varphi]\sim e^{-\hat I(\varphi)/\eps},\quad\eps\to 0.
$$
We denote by $\varphi^*$, the solution to
$$
\dot{\varphi}^*_t=\frac{\bar b(t,\varphi_t^*)}{\lambda_0(t,\varphi_t^*)},
$$
where
\begin{equation}\label{s3-barb}
\bar b(t,x):=\int_{\M} b(t,x,z)\pi_{t,x}(dz),
\end{equation}
and $\lambda_0$ is the limit of functions $\{\lambda_\eps\}_{\eps\to0}$.
It is easy to check that $I(\varphi^*)=0$; see some explicit formulas of $\hat I$ in Section \ref{sec-exp}.
Hence,
the random path of particles in time-inhomogeneous environment contributes around of the path of $\varphi^*$ as $\eps\to0$ with exponential tail, i.e., the probability of
the
path of $X^\eps$  far from that of $\varphi^*$ is exponentially small.
In addition, because of the formula of $\bar b(\cdot,\cdot)$,  one sees that the ``equilibrium" $\varphi^*$ is obtained by considering the particle in a new environment, which averages the time-inhomogeneous environment. So, in the random environment changing in time and state, we can know clearly the statistical physics of small particles if we know the invariant measure $\pi_{t,x}$ of $\xi_{t/\eps}$ describing how the external force fluctuates with each fixed time $t$ and state $x$.

We can view $\hat I$ as the action of the system, i.e.,
$$
\hat I[\varphi]=\int L(\varphi_s,\dot {\varphi}_s)ds.
$$
In above, $L(\cdot,\cdot)$ is the Legendre transform of the Hamiltonian $H(\cdot,\cdot)$;
the details
of which can found in \cite{Gre06} and references therein.
The classical principle of least action states that the actual path taken by $\varphi^*$ is an extremum of $\hat I$.
Under this observation, our result generalizes the principle of least action as follows.
The LDP indicates that
\begin{equation*}
\PP\{X^\eps\in B\}\sim e^{-\frac 1{\eps}\inf_{\varphi\in B}\hat I(\varphi)},
\end{equation*}
The probability is determined
by the path that minimizes the rate function. This corresponds
to
minimizing the action in order to find the path
that is taken by the system, where the integral $\int_0^T L(\varphi,\dot{\varphi},s)ds$ is considered as the Action.
Our results show that
the principle of least action still holds in random environment modeled for small particles.
Moreover, the statistical inference for the system in inhomogeneous models can be  obtained by averaging the time-inhomogeneous factors (in the sense of equation \eqref{s3-barb}).
This fact plays an important role in practice because, typically, the heterogeneity is often much difficult to analyze and simulate than homogeneity.

\section{Proof of Main Results}\label{sec:prof}
In this section, we give the proof of
Theorem \ref{thm-main1}.
We begin with some basic definitions and preliminaries of large deviations theory; for further details, we refer the reader to \cite{DS89,DZ98,LP92}.

\begin{deff} {\rm
		A family of stochastic processes $\{Y^\eps\}_{\eps>0}$ is said to be exponentially tight in the space $\CC([0,1],\R^d)$, if there exists an increasing sequence of compact subsets $\{K_L\}_{L\geq 1}$ of $\CC([0,1],\R^d)$ such that
		$$
\lim_{L\to\infty}\limsup_{\eps\to0}\eps\log\PP\left(Y^\eps\notin K_L\right)=-\infty.
		$$ }
\end{deff}

\begin{deff}\label{def-localLDP}
	 {\rm
		A family of stochastic processes $\{Y^\eps\}_{\eps>0}$ is said to  satisfy the local LDP in $\CC([0,1],\R^d)$ with rate function $\hat I$, if for any $\varphi\in\CC([0,1],\R^d)$,
		$$
		\begin{aligned}
		\lim_{\delta\to 0}&\limsup_{\eps\to 0}\eps\log\PP\left(Y^\eps\in B(\varphi,\delta)\right)\\
		&=\lim_{\delta\to 0}\liminf_{\eps\to 0}\eps\log\PP\left(Y^\eps\in B(\varphi,\delta)\right)\\
		&=-\hat I(\varphi),
		\end{aligned}
		$$
		where $B(\varphi,\delta)$ is the ball  centered at $\varphi$ with radius $\delta$ in $\CC([0,1],\R^d)$.}
\end{deff}

The following is a well-known result in large deviations theory; see e.g.,
\cite{DS89,DZ98,LP92}.

\begin{prop}\label{prop-local-LDP}
	The exponential tightness and the local LDP for a family $\{Y^\eps\}_{\eps>0}$ in $\CC([0,1],\R^d)$ with local rate function $\hat I$ imply the full LDP in $\CC([0, 1],\R^d)$ for this family with rate function $\hat I$.
\end{prop}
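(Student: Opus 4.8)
The plan is to follow the classical route behind this proposition (as in \cite{DS89,DZ98,LP92}): extract from the local LDP the open-set lower bound and the compact-set upper bound, upgrade the latter to arbitrary closed sets using exponential tightness, and finally verify that $\hat I$ is inf-compact. First I would record two elementary consequences of the local LDP. Fix $\varphi\in\CC([0,1],\R^d)$. Since $\delta\mapsto\liminf_{\eps\to0}\eps\log\PP(Y^\eps\in B(\varphi,\delta))$ is non-decreasing, for every $\delta>0$ one has $\liminf_{\eps\to0}\eps\log\PP(Y^\eps\in B(\varphi,\delta))\ge-\hat I(\varphi)$; and for every $\eta>0$ there is $\delta_\varphi>0$ with $\limsup_{\eps\to0}\eps\log\PP(Y^\eps\in B(\varphi,\delta_\varphi))\le-(\hat I(\varphi)\wedge\eta^{-1})+\eta$ (the truncation $\wedge\,\eta^{-1}$ handles $\hat I(\varphi)=\infty$). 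The open-set lower bound is then immediate: for an open $G$ and $\varphi\in G$, pick $\delta$ with $B(\varphi,\delta)\subset G$ and use the first inequality to get $\liminf_{\eps\to0}\eps\log\PP(Y^\eps\in G)\ge-\hat I(\varphi)$; taking the supremum over $\varphi\in G$ gives $\liminf_{\eps\to0}\eps\log\PP(Y^\eps\in G)\ge-\inf_{\varphi\in G}\hat I(\varphi)$.

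For the upper bound on a compact set $F$: given $\eta>0$, cover $F$ by finitely many balls $B(\varphi_i,\delta_{\varphi_i})$, $i=1,\dots,N$, furnished by the second inequality; since $\PP(Y^\eps\in F)\le\sum_{i=1}^{N}\PP(Y^\eps\in B(\varphi_i,\delta_{\varphi_i}))$, the principle of the largest term yields $\limsup_{\eps\to0}\eps\log\PP(Y^\eps\in F)\le-\min_{1\le i\le N}(\hat I(\varphi_i)\wedge\eta^{-1})+\eta\le-\big((\inf_{\varphi\in F}\hat I(\varphi))\wedge\eta^{-1}\big)+\eta$, and letting $\eta\to0$ gives the compact-set bound. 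To pass to an arbitrary closed set $F$, I would invoke exponential tightness: with $\{K_L\}$ as in the definition, write $\PP(Y^\eps\in F)\le\PP(Y^\eps\in F\cap K_L)+\PP(Y^\eps\notin K_L)$; since $F\cap K_L$ is compact, the principle of the largest term and the compact-set bound give $\limsup_{\eps\to0}\eps\log\PP(Y^\eps\in F)\le\max\big(-\inf_{\varphi\in F}\hat I(\varphi),\,a_L\big)$ with $a_L:=\limsup_{\eps\to0}\eps\log\PP(Y^\eps\notin K_L)\to-\infty$, and letting $L\to\infty$ produces the closed-set upper bound.

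Finally I would check inf-compactness of $\hat I$. Lower semicontinuity is read off the definition: if $\varphi_n\to\varphi$, then every ball $B(\varphi,\delta)$ eventually contains $B(\varphi_n,\delta/2)$, whence $\hat I(\varphi)\le\liminf_n\hat I(\varphi_n)$, so each level set $\{\hat I\le\alpha\}$ is closed. For precompactness, fix $\alpha$ and use exponential tightness to choose a compact $K$ with $\limsup_{\eps\to0}\eps\log\PP(Y^\eps\notin K)<-\alpha$. If $\varphi\notin K$, then $B(\varphi,\delta)\subset K^c$ for some $\delta>0$, hence $\hat I(\varphi)\ge-\limsup_{\eps\to0}\eps\log\PP(Y^\eps\in B(\varphi,\delta))\ge-\limsup_{\eps\to0}\eps\log\PP(Y^\eps\notin K)>\alpha$; thus $\{\hat I\le\alpha\}\subset K$, and being a closed subset of a compact set it is compact. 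Together with the two bounds above, this establishes the full LDP with rate function $\hat I$.

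The only genuinely delicate point is the compact-to-closed upgrade, where two large-deviation estimates must be combined at matched exponential scales; but this is precisely the role of exponential tightness, so I do not expect an essential obstacle, and the remaining steps are routine once the two consequences of the local LDP are in place.
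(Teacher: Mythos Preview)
Your proof is correct and follows the classical route from \cite{DS89,DZ98,LP92}. Note, however, that the paper does not supply its own proof of this proposition: it is stated as a well-known result with citations to those references, so there is nothing in the paper to compare your argument against beyond confirming that you have reproduced the standard argument those references contain.
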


\subsection{A Road Map}
To help the reading,
we provide a road map of our approach.
To establish the family of processes $\{X^\eps\}_{\eps>0}$ satisfying the LDP, we prove that it enjoys the exponential tightness and the local LDP thanks to Proposition \ref{prop-local-LDP}.

\noindent {\bf Representation of the solution.} First, we carefully examine the  solutions.
Since $X^\eps_t$ is the solution of a second-order differential equation, we need to solve the equation for its derivative first by using the variation of parameter formula.
We rewrite  equation \eqref{qe} as \eqref{pq} and solve \eqref{pq} to find $p^\eps_t$.
Then, we can the formula for $X^\eps_t$ as
$$
X^\eps_t=x_0+x_1\int_0^t e^{-A_\eps(s)}ds+\dfrac 1{\eps^2}\int_0^t\int_0^s e^{-A_\eps(s,r)}b(r,X^\eps_r,\xi_{r/\eps})dr ds+\dfrac 1{\eps^2}\int_0^t H_\eps(s)ds,
$$
where for any $0\leq s\leq t\leq 1, \eps>0$,
$$A_\eps(t,s):=\dfrac 1{\eps^2}\int_s^t\lambda_\eps(r,X^\eps_r)dr,\;\;\;A_\eps(t)=A_\eps(t,0),$$
$$H_\eps(t):=\sqrt \eps e^{-A_\eps(t)}\int_0^te^{A_\eps(s)}
\sigma_\eps(s,X^\eps_s)dw(s).$$
The large factor $\frac 1{\eps^2}$ in $\frac 1{\eps^2}\int_0^t H_\eps(s)ds$ is a main challenge for us to obtain the desired estimates directly. Therefore, we use an integration by parts formula
to overcome the difficulty;
 see Section \ref{sec:rep}.

\noindent {\bf
Exponential tightness.} It suffices to prove $\{X^\eps\}_{\eps>0}$ satisfying the (extended) Puhalskii's criteria (see
\cite[Theorem 3.1]{LP92} and
\cite[Remark 4.2]{FeKu}), which are
\begin{equation}\label{p-cre-1}
\lim_{L\to\infty}\limsup_{\eps\to 0}\eps\log \PP\Big(\norm{X^\eps}>L\Big)=-\infty,
\end{equation}
\begin{equation}\label{p-cre-2}
\lim_{\delta\to 0}\limsup_{\eps\to0}\sup_{s\in [0,1]}\eps\log\PP\Big(\sup_{s\leq t\leq s+\delta}\left|X^\eps_t-X^\eps_s\right|>\ell\Big)=-\infty,\quad\forall \ell>0.
\end{equation}
To prove \eqref{p-cre-1}, one has to  carefully  estimate the term $H_\eps(t)$
and prove that
\begin{equation}\label{eq-p-H}
\lim_{L\to\infty}\limsup_{\eps\to 0}\eps\log \PP\Big(\norm{H_\eps}>L\Big)=-\infty.
\end{equation}
The difficulty is that we cannot move the non-adapted variable $ e^{-A_\eps(t)}$ into the stochastic integral in It\^o's sense and use estimates for martingales.
On the other hand, we need the term ($e^{-A_\eps(t)}$) to balance the large term $e^{A_\eps(s)}$ in the stochastic integral.
In this case, we need to use the regularity of the coefficient to estimate the stochastic integral in pathwise sense (see e.g., \cite{Freidlin}).
We bound the stochastic process $H_\eps(t)$ by a function of $\sqrt\eps\norm{w}$ and use
the LDP for Brownian motions;
see the details in Section \ref{sec:expbound}.

For the ``exponential equi-continuity condition" \eqref{p-cre-2},
the subtlety
is due to the ``non-adaptedness" mentioned above and the ``non-integral form" of $H_\eps(t)$, which require more delicate analysis to estimate its (uniform) changes in small time.
We need to decompose $H_\eps(t)$ as
$$
H_\eps(t)=\sqrt\eps e^{-M_\eps(t)}h_\eps(t),
$$
where  $M_\eps(t):=M_\eps(t,0)$ with $M_\eps(t,s):=A_\eps(t,s)-\bar A_\eps(t,s)$, and
$$\bar A_\eps(t,s):=
\frac 1{\eps^2}\int_s^t \lambda_\eps(r,0)dr,\quad \bar A_\eps(t)=\bar A_\eps(t,0),\quad
h_\eps(t):=e^{-\bar A_\eps(t)}\int_0^t e^{A_\eps(r)}\sigma_\eps(r,X^\eps_r)dw(r).
$$
 This decomposition separates $H_\eps(t)$ into two parts. One of them is $e^{-M_\eps(t)}$, whose changes in time can be estimated by using the regularity of $\lambda$ and the boundedness established in \eqref{p-cre-1}. The
 other is $h_\eps(t)$, in which we  can move $e^{-\bar A_\eps(t)}$ into the stochastic integral in It\^o's sense. Although the integrand now is adapted, such integral is a
  stochastic convolution but
  it is not a martingale. Therefore, we cannot directly  estimate the tail probability $\PP(\sup_t |h_\eps(t)|>L)$ as in the martingale cases.
 By keeping $t$ frozen, $h_\eps(t)$ can be viewed as an element of a sequence of martingales.
  Then the exponential inequality for martingale helps us to estimate the change in small time intervals of $h_\eps(t)$.
  Therefore, by a technical Lemma \ref{lm-1111}, we are able to obtain the desired estimates.
  The details of this part are in Section \ref{sub:expcont}.

\noindent {\bf Local LDPs.}
Using the intuition of the Smoluchowski-Kramers approximation, we  give some local estimates (similar to ``exponential equivalence property", but in the local sense) for $X^\eps_t$ and $q^\eps_t$, the solutions of the associated first-order equation.
The calculations of this part are relatively complex, whose details are in Section \ref{sec:local}.
Our intuitions and ideas are as follows.

Given an absolutely continuous function $\varphi\in \CC([0,1],\R^d)$ and a neighborhood $B(\varphi,\theta)$ (the ball with center $\varphi$ and radius $\theta$ in $\CC([0,1],\R^d)$), we  prove that there is a neighborhood $B(\varphi,\bar\theta)$ of $\varphi$ such that the difference between the probabilities of $X^\eps\in B(\varphi,\theta)$ and the that of $q^\eps\in B(\varphi,\bar\theta)$ is exponentially small.
As was seen, the distance of $X^\eps$ and $q^\eps$ depends on $H^\eps$.
Since we cannot move the non-adapted random variable inside the stochastic integral in the It\^o sense, estimate \eqref{eq-p-H} cannot be improved. We need a term to leverage the decay (as $\eps\to 0$) of the distance between $X^\eps$ and $q^\eps$ and get an  ``exponentially equivalent" property in the local sense.
By looking at the behavior of the families $\{X^\eps\}$ and $\{q^\eps\}$ around a fixed (absolutely continuous) function $\varphi$, we are able to give  estimates for
$$
\left|\PP\left\{\|q^\eps-\varphi\|<\theta\right\}-
\PP\left\{\|X^\eps-\varphi\|<\bar\theta_1\right\}\right|
$$
 depending on $\|H_\eps^\varphi\|$, where
$$H_\eps^\varphi(t)=\sqrt{\eps} e^{-A_\eps^\varphi(t)}\int_0^t e^{A^\varphi_\eps(s)}\sigma_\eps(r,\varphi_r)dw(r),$$
and
$$A_\eps^\varphi(t,s)=\dfrac 1{\eps^2}\int_s^t\lambda_\eps(r,\varphi_r)
dr;\;\;\;A_\eps^\varphi(t)=A_\eps^\varphi(t,0).$$
In contrast to $H_\eps(t)$, we can write
$$H_\eps^\varphi(t)=\sqrt{\eps}\int_0^t e^{-A_\eps^\varphi(t)+A^\varphi_\eps(s)}\sigma_\eps(r,\varphi_r)dw(r),$$ and understand it in It\^o's sense due to the independence of $\varphi_t$ and $w(t)$.
Again, it is noted that $H_\eps^\varphi(t)$ is not a martingale with respect to $t\in [0,1]$. However, by estimating its change in small time intervals, we can obtain the following (Lemma \ref{lm-Hphi})
$$
\PP\left\{\sup_{t\in[0,1]}|H_\eps^\varphi(t)|>\ell\right\}\leq \bar M_1\exp\left\{-\frac{\bar M_2\ell^2}{\eps^2}\right\},\;\forall \ell>0,
$$
for some constants $\bar M_1,\bar M_2$, independent of $\eps$ and $\ell$.
This fact allows us to  neglect $H_\eps^\varphi$ asymptotically.

The terms left by decomposition process when estimating $\PP(\|X^\eps-\varphi\|<\bar \theta)$ are either controlled by $\eps$, or $\PP(\|H_\eps^\varphi\|>\ell)$, or $\PP(\|q^\eps-\varphi\|<\theta)$.
As a consequence, we can study the behavior of $X^\eps$ around $\varphi$ using that of $q^\eps$.
Then, the local LDP of the family of the solutions of the first-order equations allows us to obtain the local LDP for $\{X^\eps\}_{\eps>0}$;
see details in Section \ref{sec:local}.

\subsection{Representation Formula for Solutions}\label{sec:rep}
Under Assumption \ref{asp-1},
 equation \eqref{pq} admits a unique solution $(X^\eps,p^\eps) \in \CC([0,1],\R^{2d})$. Therefore, equation \eqref{qe} has a unique solution $X^\eps\in \CC([0,1],\R^d).$
From equation \eqref{pq}, by a variation of parameter formula, we  obtain
\begin{equation}\label{formulap}
\begin{aligned}
p^\eps_t=x_1e^{-A_\eps(t)}+\dfrac 1{\eps^2}\int_0^te^{-A_\eps(t,s)}b(s,X^\eps_s,\xi_{s/\eps})ds
+\dfrac 1{\eps^2}H_\eps(t),
\end{aligned}
\end{equation}
where for any $0\leq s\leq t\leq 1, \eps>0$,
$$A_\eps(t,s):=\dfrac 1{\eps^2}\int_s^t\lambda_\eps(r,X_r^\eps)dr,\;\;\;A_\eps(t)=A_\eps(t,0),$$
$$H_\eps(t):=\sqrt \eps e^{-A_\eps(t)}\int_0^te^{A_\eps(s)}
\sigma_\eps(s,X^\eps_s)dw(s).$$
Therefore, we obtain the formula for $X_t^\eps$ as follows
\begin{equation}\label{qint-1}
\begin{aligned}
X^\eps_t=x_0+x_1\int_0^t e^{-A_\eps(s)}ds+\dfrac 1{\eps^2}\int_0^t\int_0^s e^{-A_\eps(s,r)}b(r,X^\eps_r,\xi_{r/\eps})dr ds+\dfrac 1{\eps^2}\int_0^t H_\eps(s)ds.
\end{aligned}
\end{equation}
Using  an integration by parts formula, we have
\begin{equation}\label{qint}
\begin{aligned}
X^\eps_t=x_0+\int_0^t \frac{b(s,X^\eps_s,\xi_{s/\eps})}{\lambda_\eps\left(s,X^\eps_s\right)}ds+\sqrt\eps\int_0^t \frac{\sigma_\eps(s,X^\eps_s)}{\lambda_\eps\left(s,X^\eps_s\right)}dw(s)
+R_\eps(t),
\end{aligned}
\end{equation}
where
\beq \label{R-def-eq}
\begin{aligned}
R_\eps(t):=&x_1\int_0^t e^{-A_\eps(s)}ds-\frac 1{\lambda_\eps(t,X^\eps_t)}\int_0^t e^{-A_\eps(t,s)}b(s,X^\eps_s,\xi_{s/\eps})ds\\
&-\int_0^t\frac 1{\lambda^2_\eps(s,X^\eps_s)} \left(\int_0^s e^{-A_\eps(s,r)}b(r,X^\eps_r,\xi_{r/\eps})dr\right)
\left(\nabla_s\lambda_\eps(s,X^\eps_s)+\left\langle \nabla_X\lambda_\eps(s,X^\eps_s),p^\eps_s\right\rangle\right)ds\\
&-\dfrac 1{\lambda_\eps(t,X^\eps_t)}H_\eps(t)-\int_0^t \dfrac 1{\lambda^2_\eps(s,X^\eps_s)}H_\eps(s)
\left(\nabla_s\lambda(s,\eps^2X^\eps_s)+\left\langle \nabla_X\lambda_\eps(s,X^\eps_s),p^\eps_s\right\rangle\right)ds\\
=:&\sum_{i=1}^5 R_\eps^{(i)}(t).
\end{aligned}
\eeq

\subsection{Exponential Tightness of $\{X^\eps\}_{\eps>0}$}
\label{sec:exp}
In this section, we investigate the exponential tightness of $\{X^\eps\}_{\eps>0}$ in $\CC([0,1],\R^d)$ by proving \eqref{p-cre-1} and \eqref{p-cre-2}.
In what follows, whenever the estimates involve random variables, they should be understood in the sense of with probability 1 if it is not specified otherwise, which is our convention henceforth.

\subsubsection{Proof of \eqref{p-cre-1}}\label{sec:expbound}
We begin with the following Proposition, whose proof is postponed to the Appendix.

\begin{prop}\label{prop-Heps}
There is a finite constant $\hat C_0$, independent of $\eps$ such that
\begin{equation}\label{eq-37}
\|H_\eps\|\leq \hat C_0\sqrt\eps\norm{w}e^{\hat C_0\sqrt\eps\norm w}.
\end{equation}
As a result, there is a finite constant $\hat C_1$, independent of $\eps$ such that
\begin{equation*}
\|X^\eps\|\leq \hat C_1\Gamma(\hat C_1\sqrt\eps\norm{w}),
\end{equation*}
where
$$
\Gamma(v):=(1+ve^v+v^2e^{2v})(1+v)e^{1+ve^v},\;v\geq 0.
$$
\end{prop}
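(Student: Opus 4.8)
The plan is to prove the bound on $\|H_\eps\|$ first, and then feed it into the Gronwall-type estimate coming from the representation formula \eqref{qint-1} to get the bound on $\|X^\eps\|$.

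For the estimate \eqref{eq-37}, the key point is that we cannot keep $H_\eps(t)=\sqrt\eps e^{-A_\eps(t)}\int_0^t e^{A_\eps(s)}\sigma_\eps(s,X^\eps_s)\,dw(s)$ in It\^o form, since the factor $e^{-A_\eps(t)}$ is not $\F_t$-measurable and we need it to cancel the explosive factor $e^{A_\eps(s)}$ inside. So the first step is to re-express the stochastic integral pathwise via integration by parts in $s$: write $e^{A_\eps(s)}\sigma_\eps(s,X^\eps_s)\,dw(s)$ and integrate the $w$-factor by parts, moving the derivative onto $s\mapsto e^{A_\eps(s)}\sigma_\eps(s,X^\eps_s)$. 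The $s$-derivative of $A_\eps(s)$ is $\lambda_\eps(s,X^\eps_s)/\eps^2$, which is bounded (by Assumption~\ref{asp-1}) up to the $1/\eps^2$; the $s$-derivative of $X^\eps_s$ is $p^\eps_s$, but it only enters through $\nabla_x\sigma_\eps$ and $\nabla_x\lambda_\eps$, which are $O(\eps^2)$ by Assumption~\ref{asp-1}, so those terms are harmless; and $\nabla_t\sigma_\eps$ is bounded. After this, every occurrence of $e^{A_\eps(s)}$ is multiplied by $e^{-A_\eps(t)}$ with $s\le t$, and since $\lambda_\eps\ge \kappa_0/2>0$ for small $\eps$, we have $e^{-A_\eps(t)+A_\eps(s)}=e^{-A_\eps(t,s)}\le 1$. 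Collecting terms, $|H_\eps(t)|$ is bounded by $C\sqrt\eps\|w\|$ plus $C\sqrt\eps\|w\|\cdot\frac{1}{\eps^2}\int_0^t e^{-A_\eps(t,s)}\,ds$, and the last integral is again $O(1)$ because $\frac{1}{\eps^2}\int_0^t e^{-A_\eps(t,s)}\lambda_\eps(s,X^\eps_s)\,ds\le 1$ and $\lambda_\eps$ is bounded below. One subtlety: the terms involving $p^\eps_s$ through $\nabla_x\lambda_\eps$ are $O(\eps^2)\cdot|p^\eps_s|$, and from \eqref{formulap}, $|p^\eps_s|$ itself contains a $\frac{1}{\eps^2}H_\eps(s)$ term; this produces a self-referential bound of the form $\|H_\eps\|\le C\sqrt\eps\|w\| + C\sqrt\eps\|w\|\cdot\|H_\eps\|$ after using boundedness of $b$ and $\lambda$, or more precisely, one closes the estimate by a Gronwall argument in $t$, which is exactly what yields the $e^{\hat C_0\sqrt\eps\|w\|}$ factor rather than a purely linear bound.

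For the bound on $\|X^\eps\|$, I start from \eqref{qint-1}. The first term is $|x_0|$, the second is $\le |x_1|\cdot t\le |x_1|$. The double-integral term involving $b$: using the Lipschitz bound on $b$ in $x$ (so $|b(r,X^\eps_r,\xi_{r/\eps})|\le |b(r,0,\xi_{r/\eps})|+C|X^\eps_r|$) — here one needs $\sup_{r,\xi}|b(r,0,\xi)|<\infty$, or one absorbs it; then $\frac{1}{\eps^2}\int_0^t\int_0^s e^{-A_\eps(s,r)}\,dr\,ds\le \frac{2}{\kappa_0}\int_0^t 1\,ds\le C$, so this term contributes $C + C\int_0^t \|X^\eps\|_{[0,s]}\,ds$ where $\|X^\eps\|_{[0,s]}:=\sup_{r\le s}|X^\eps_r|$. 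The last term $\frac{1}{\eps^2}\int_0^t H_\eps(s)\,ds$ — here we again integrate by parts to avoid the $1/\eps^2$, exactly as in \eqref{qint}--\eqref{R-def-eq}: this is what produces the $R_\eps$ decomposition, and each $R_\eps^{(i)}$ is controlled by $\|H_\eps\|$ (times bounded quantities, possibly times $\int_0^t \|X^\eps\|_{[0,s]}\,ds$ from the $p^\eps_s$ terms, which again feed back through $H_\eps$). Putting $v:=\hat C_1\sqrt\eps\|w\|$, the estimate \eqref{eq-37} gives $\|H_\eps\|\le C(v e^v)$, its square $\le C v^2 e^{2v}$ arises from products, and after a Gronwall inequality in $t$ one gets $\|X^\eps\|\le (1+ve^v+v^2e^{2v})(1+v)e^{1+ve^v}=\hat C_1\Gamma(\hat C_1\sqrt\eps\|w\|)$ up to renaming constants; the three factors in $\Gamma$ correspond respectively to the three scales $1$, $H_\eps$, $H_\eps^2$ appearing, the $(1+v)$ to the $x_1$-term, and the exponential to the Gronwall factor.

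The main obstacle is the pathwise (integration-by-parts) treatment of the stochastic integral in $H_\eps$ and the bookkeeping of the self-referential dependence on $p^\eps_s$: one has to carefully verify that all the ``bad'' terms (those carrying $|p^\eps_s|$, hence implicitly another factor of $\frac{1}{\eps^2}H_\eps$) come with a compensating factor of $\eps^2$ from $\nabla_x\lambda_\eps$ or $\nabla_x\sigma_\eps$, so that the Gronwall constant stays $\eps$-independent. This is precisely why Assumption~\ref{asp-1} imposes $\nabla_x\lambda_\eps,\nabla_{xx}\lambda_\eps,\nabla_x\sigma_\eps=O(\eps^2)$, and the $\eps^2$ cancels the $1/\eps^2$ in $A_\eps$ and in $p^\eps$. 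The remaining details — differentiating $A_\eps$, bounding $\frac{1}{\eps^2}\int e^{-A_\eps(t,s)}\lambda_\eps\,ds\le 1$, and the final Gronwall step — are routine, and (as the excerpt indicates) the full argument is deferred to the Appendix.
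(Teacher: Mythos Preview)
Your proposal is correct and follows essentially the same approach as the paper's proof: pathwise integration by parts on the stochastic integral in $H_\eps$ to cancel $e^{-A_\eps(t)}$ against $e^{A_\eps(s)}$, a self-referential Gronwall closure using the $O(\eps^2)$-smallness of $\nabla_x\sigma_\eps$ to control the $p^\eps$-terms (via $|p^\eps_t|\le C(1+\eps^{-2}\sup_{s\le t}|H_\eps(s)|)$), and then the representation \eqref{qint}--\eqref{R-def-eq} for $\|X^\eps\|$. One minor correction: the factor $(1+v)$ in $\Gamma$ comes from the pathwise estimate of the diffusion term $\sqrt\eps\int_0^t\frac{\sigma_\eps}{\lambda_\eps}\,dw$ (which the paper bounds by $C\sqrt\eps\|w\|(1+\|H_\eps\|)$ via the same integration-by-parts trick), not from the $x_1$-term, which is simply absorbed into the constants.
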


\begin{proof}[Proof of \eqref{p-cre-1}]
We have from Proposition \ref{prop-Heps} that
\begin{equation}\label{eq-normX}
\|X^\eps\|\leq \hat C_1\Gamma(\hat C_1\sqrt\eps\norm{w}),
\end{equation}
for some finite constant $\hat C_1$ independent of $\eps$ and $\Gamma(\cdot)$ as in Proposition \ref{prop-Heps}.
On the other hand, by the LDP for
 the Brownian motion $w(t)$, we have
\begin{equation}\label{LDPW}
\begin{aligned}
\displaystyle&\lim_{L\to\infty}\limsup_{\eps\to 0}\eps\log\PP\{\sqrt\eps\norm{w}\geq L\}
= -\infty.
\end{aligned}
\end{equation}
Combining \eqref{eq-normX} and \eqref{LDPW}, we obtain
$$\begin{aligned}
\displaystyle\lim_{L\to\infty}&\limsup_{\eps\to 0}\eps\log\PP\left\{\norm{X^\eps}\geq L\right\}
\\&\leq \lim_{L\to\infty}\limsup_{\eps\to 0}\eps\log\PP\Big\{\sqrt\eps\norm{w}\geq M(L,\hat C_1)\Big\}
\\&=-\infty,
\end{aligned}$$
where $M(L,\hat C_1)$
is a constant depending on $L$ and $\hat C_1$ that tends to $\infty$ as $L\to \infty$. Therefore, the proof is complete.
\end{proof}

\subsubsection{Proof of \eqref{p-cre-2}}\label{sub:expcont}
\begin{prop}\label{prop-Hepsts}
For any $\ell>0$, we have
$$
\lim_{\delta\to 0}\limsup_{\eps\to0}\sup_{s\in [0,1]}\eps\log\PP\Big(\sup_{s\leq t\leq s+\delta}\left|H_\eps(t)-H_\eps(s)\right|>\ell\Big)=-\infty.
$$
\end{prop}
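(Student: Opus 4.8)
\emph{Proof plan.} This is the ``exponential equi-continuity'' estimate for the stochastic convolution $H_\eps$, and it suffices to establish it on the favorable event $\Omega_\eps^L:=\{\sqrt\eps\norm{w}\le L\}$ and then let $L\to\infty$: by Proposition \ref{prop-Heps}, on $\Omega_\eps^L$ one has $\norm{X^\eps}\le N_L$ and $\norm{H_\eps}\le\Lambda_L$ with $N_L,\Lambda_L$ independent of $\eps$, while $\limsup_{\eps\to0}\eps\log\PP((\Omega_\eps^L)^c)\le-R(L)$ with $R(L)\to\infty$ by \eqref{LDPW}; bounding $\PP(\cdots)$ by $\PP((\Omega_\eps^L)^c)+\PP(\cdots,\Omega_\eps^L)$, it is enough to control $\PP\big(\sup_{s\le t\le s+\delta}\abs{H_\eps(t)-H_\eps(s)}>\ell,\ \Omega_\eps^L\big)$ uniformly in $s\in[0,1]$ and send $\eps\to0$, $\delta\to0$, $L\to\infty$ in that order. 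The engine is the decomposition $H_\eps(t)=\sqrt\eps e^{-M_\eps(t)}h_\eps(t)$, $h_\eps(t)=e^{-\bar A_\eps(t)}N_\eps(t)$, $N_\eps(t):=\int_0^t e^{A_\eps(r)}\sigma_\eps(r,X^\eps_r)\,dw(r)$, $M_\eps(t)=A_\eps(t)-\bar A_\eps(t)$: here $\bar A_\eps$ is \emph{deterministic}, whereas $\abs{M_\eps(t,s)}=\big|\eps^{-2}\int_s^t(\lambda_\eps(r,X^\eps_r)-\lambda_\eps(r,0))\,dr\big|\le C\norm{X^\eps}(t-s)$ because $\abs{\nabla_x\lambda_\eps}\le C\eps^2$ (Assumption \ref{asp-1}). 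Telescoping $e^{-M_\eps(t)}-e^{-M_\eps(s)}$ and $e^{-\bar A_\eps(t,s)}-1$, I write for $s\le t\le s+\delta$
\begin{equation*}
H_\eps(t)-H_\eps(s)=\underbrace{\sqrt\eps\big(e^{-M_\eps(t)}-e^{-M_\eps(s)}\big)h_\eps(s)}_{\mathrm I(t)}+\underbrace{\sqrt\eps e^{-M_\eps(t)}\big(e^{-\bar A_\eps(t,s)}-1\big)h_\eps(s)}_{\mathrm{II}(t)}+\underbrace{\sqrt\eps e^{-A_\eps(t)}\big(N_\eps(t)-N_\eps(s)\big)}_{\mathrm{III}(t)}.
\end{equation*}
Term $\mathrm I$ requires only deterministic estimates on $\Omega_\eps^L$: $\abs{M_\eps(t,s)}\le CN_L\delta$, $\abs{M_\eps(s)}\le CN_L$, and $\sqrt\eps\abs{h_\eps(s)}=\abs{e^{M_\eps(s)}H_\eps(s)}\le e^{CN_L}\Lambda_L$, so $\sup_{s\le t\le s+\delta}\abs{\mathrm I(t)}\le C_L\delta<\ell/3$ once $\delta$ is small (depending on $L$), whence $\PP(\sup_t\abs{\mathrm I(t)}>\ell/3,\Omega_\eps^L)=0$.

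The gain for $\mathrm{II}$ and $\mathrm{III}$ is a quadratic-variation bound of order $\eps^3$. Let $\tau_L:=\inf\{t:\norm{X^\eps}_{[0,t]}>N_L\}\wedge1$. For each \emph{frozen} $u$ the process $v\mapsto\sqrt\eps e^{-\bar A_\eps(u)}N_\eps(v\wedge\tau_L)$ is a continuous martingale (a deterministic multiple of $N_\eps$ stopped at $\tau_L$), and since $\sqrt\eps e^{-\bar A_\eps(u)}e^{A_\eps(v)}=\sqrt\eps e^{M_\eps(v)}e^{-\bar A_\eps(u,v)}$ with $\bar A_\eps(u,v)\ge\tfrac{\kappa_0}{2\eps^2}(u-v)$ for $\eps$ small, $\abs{M_\eps(v)}\le CN_L$ and $\abs{\sigma_\eps}$ bounded up to $\tau_L$, its quadratic variation at $v=u$ is at most $\eps e^{CN_L}C^2\int_0^u e^{-\kappa_0(u-v)/\eps^2}\,dv\le\hat K_L\eps^3$. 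On $\Omega_\eps^L$, $\abs{e^{-\bar A_\eps(t,s)}-1}\le1$ and $\abs{e^{-M_\eps(t)}}\le e^{CN_L}$, so $\sup_t\abs{\mathrm{II}(t)}\le e^{CN_L}\sqrt\eps\abs{h_\eps(s)}$, and the exponential inequality for continuous martingales with bounded quadratic variation (applied to $v\mapsto\sqrt\eps e^{-\bar A_\eps(s)}N_\eps(v\wedge\tau_L)$) gives
\begin{equation*}
\PP\Big(\sup_{s\le t\le s+\delta}\abs{\mathrm{II}(t)}>\tfrac\ell3,\ \Omega_\eps^L\Big)\le\PP\Big(\sqrt\eps\abs{h_\eps(s)}>e^{-CN_L}\tfrac\ell3,\ \Omega_\eps^L\Big)\le2\exp\!\Big(-\frac{c_L\ell^2}{\eps^3}\Big),
\end{equation*}
which is super-exponentially small, uniformly in $s$.

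Term $\mathrm{III}$ is the crux and the one place a genuinely new tool enters: $t\mapsto\sqrt\eps e^{-A_\eps(t)}(N_\eps(t)-N_\eps(s))=\sqrt\eps e^{-M_\eps(t)}e^{-\bar A_\eps(t)}(N_\eps(t)-N_\eps(s))$ is a stochastic convolution, \emph{not} a martingale in $t$ (the non-adapted $e^{-A_\eps(t)}$ cannot be pulled inside the It\^o integral), so no Doob-type inequality applies to its running maximum. On $\Omega_\eps^L$ it suffices to control $\sup_{s\le t\le s+\delta}\abs{\sqrt\eps e^{-\bar A_\eps(t)}(N_\eps(t)-N_\eps(s))}$; for each frozen $t$ this is (after stopping at $\tau_L$) a martingale in its argument with quadratic variation at most $\hat K_L\eps^3$, exactly as above. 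To upgrade the frozen-$t$ estimates to the supremum I invoke the technical Lemma \ref{lm-1111}: subdividing $[s,s+\delta]$ into $\lceil\delta/\eps^2\rceil$ subintervals on each of which $\bar A_\eps$ varies by $O(1)$ — the device borrowed from the treatment of stochastic convolutions in SPDEs — the supremum is controlled up to an $O(1)$ factor by the values of the convolution at the $O(\delta/\eps^2)$ subdivision points, and a union bound yields $\PP(\sup_{s\le t\le s+\delta}\abs{\mathrm{III}(t)}>\ell/3,\Omega_\eps^L)\le\tfrac{C\delta}{\eps^2}\exp(-c_L\ell^2/\eps^3)$, still super-exponentially small. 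Adding $\mathrm I,\mathrm{II},\mathrm{III}$ gives $\limsup_{\eps\to0}\sup_s\eps\log\PP(\sup_{s\le t\le s+\delta}\abs{H_\eps(t)-H_\eps(s)}>\ell)\le-R(L)$ for all small $\delta$; then $\delta\to0$ and $L\to\infty$ give $-\infty$. The hard part is exactly Term $\mathrm{III}$ (and the $\sqrt\eps\abs{h_\eps(s)}$ bound hidden in $\mathrm{II}$): the factor $\eps^{-2}$ implicit in $H_\eps$ must be defeated by the $O(\eps^3)$ quadratic variation, which forces one to peel the deterministic part $\bar A_\eps$ off $A_\eps$, estimate frozen-time martingales, and recombine via Lemma \ref{lm-1111}, while keeping every quadratic-variation bound valid on the favorable event dictates the systematic use of $\tau_L$.
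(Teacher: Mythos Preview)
Your proposal is correct and follows the same core decomposition as the paper: write $H_\eps(t)=\sqrt\eps e^{-M_\eps(t)}h_\eps(t)$ with $M_\eps=A_\eps-\bar A_\eps$ deterministic-free, split $H_\eps(t)-H_\eps(s)$ into the $(e^{-M_\eps(t)}-e^{-M_\eps(s)})h_\eps(s)$ piece and the $e^{-M_\eps(t)}(h_\eps(t)-h_\eps(s))$ piece, and exploit that frozen-time versions of the latter are martingales with quadratic variation $O(\eps^3)$ once $\|X^\eps\|$ is controlled. Your localization via the favorable event $\Omega_\eps^L$ and the stopping time $\tau_L$ is equivalent to what the paper does by carrying $\|X^\eps\|$ as a random variable in the probability bounds and invoking \eqref{p-cre-1} at the end; and your deterministic handling of Term~I via the pathwise bound $\sqrt\eps|h_\eps(s)|=|e^{M_\eps(s)}H_\eps(s)|\le e^{CN_L}\Lambda_L$ is in fact cleaner than the paper's probabilistic treatment of the analogous $K_\eps^{(2)}$.

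The one genuine methodological difference is in Term~III. The paper obtains a H\"older-type increment bound $\PP(|h_\eps(s_2,s_1)|\ge L|s_2-s_1|^{1/8})\le\exp(-c/(\eps\delta^{1/8}|s_2-s_1|^{1/4}))$ and then applies the dyadic chaining Lemma~\ref{lm-1111} to pass to the supremum. You instead subdivide $[s,s+\delta]$ into $\lceil\delta/\eps^2\rceil$ subintervals (so that $\bar A_\eps$ varies by $O(1)$ on each), control both the values at the grid points and the within-subinterval martingale increments by $\exp(-c_L\ell^2/\eps^3)$, and union-bound. Both work and both yield super-exponential decay; your route is more elementary and avoids chaining, at the price of the polynomial prefactor $\delta/\eps^2$ (harmless after taking $\eps\log$). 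Two small points to clean up: your invocation of Lemma~\ref{lm-1111} is misplaced, since your subdivision argument is self-contained and does not use it; and your sentence ``the supremum is controlled up to an $O(1)$ factor by the values at the subdivision points'' is imprecise---you also need the within-subinterval martingale suprema, which you should state explicitly (they satisfy the same $O(\eps^3)$ quadratic-variation bound, so the union bound is unaffected).
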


\begin{proof}
Let $\ell>0$ be fixed. Let $\delta\in(0,1)$ be fixed but otherwise arbitrary and $\eps\in(0,1)$.
In what follows, we mainly work with $s,t\in [0,1]$, $0\leq t-s\leq \delta$.
Denote
$$\bar A_\eps(t,s):=
\frac 1{\eps^2}\int_s^t \lambda_\eps(r,0)dr,\;
\bar A_\eps(t):=\bar A_\eps(t,0),\;
M_\eps(t,s):=A_\eps(t,s)-\bar A_\eps(t,s),\; M_\eps(t):=M_\eps(t,0).$$
Using property of $\lambda$, it is easily seen that
\begin{equation}\label{eq-Mbar}
|M_\eps(t,s)|\leq \kappa_1\int_s^t |X^\eps_r|dr,
\end{equation}
where $\kappa_1$ is some finite constant, independent of $\eps,t,s$.
By definition of $H_\eps(t)$ in Section \ref{sec:rep}, we rewrite
\begin{equation}\label{prop32-eq1}
H_\eps(t)=\sqrt\eps e^{-M_\eps(t)}h_\eps(t),
\text{ where }
h_\eps(t):=e^{-\bar A_\eps(t)}\int_0^t e^{A_\eps(r)}\sigma_\eps(r,X^\eps_r)dw(r),
\end{equation}
and denote
\begin{equation}\label{eq-def-heps}
\begin{aligned}
h_\eps(t,s):=&h_\eps(t)-h_\eps(s)\\
=&\int_s^t e^{-\bar A_\eps(t,r)+ M_\eps(r)}\sigma_\eps(r,X^\eps_r)dw(r)-(1-e^{-\bar A_\eps(t,s)}) \int_0^s e^{-\bar A_\eps(s,r)+M_\eps(r)}\sigma_\eps(r,X^\eps_r)dw(r).
\end{aligned}
\end{equation}
In the above, we can move $\bar A_\eps(t)$ into the It\^o's integral because of independence of $\bar A_\eps(t)$ and $w(t)$.
Moreover, it is noted that $h_\eps(t)$ (resp. $h_\eps(t,s)$) is $\R^d$-valued, and we will denote by $h_\eps^{(i)}(t)$ (resp. $h_\eps^{(i)}(t,s)$) their $i$-th component, $i=1,\dots,d$.
Using \eqref{prop32-eq1}, we have following decomposition
$$
\begin{aligned}
H_\eps(t)-H_\eps(s)=&\Big(\sqrt\eps e^{-M_\eps(t)}h_\eps(t)-\sqrt\eps e^{-M_\eps(t)}h_\eps(s)\Big)
+\Big(\sqrt\eps e^{-M_\eps(t)}h_\eps(s)-\sqrt\eps e^{-M_\eps(s)}h_\eps(s)\Big)\\
=:& K_\eps^{(1)}(t,s)+K_\eps^{(2)}(t,s).
\end{aligned}
$$

Next we proceed to estimate $K_\eps^{(1)}$  and $K_\eps^{(2)}$.
Although $-\bar A_\eps(t,r)$ and $M_\eps(r)$ are adapted
with respect to filtration generated by the Brownian motion, $\{h_\eps^{(i)}(t,s)\}_{t\geq s}$ is not a martingale with respect to $t$. However, if we frozen $t$, the sequence
$$
\int_s^{t'}e^{-\bar A_\eps(t,r)+ M_\eps(r)}\sigma^{(i)}_\eps(r,X^\eps_r)dw(r)
$$
is a martingale with respect to $t'\in [s,t]$ and has the quadratic variation,
$$
\int_s^{t'} e^{-2\bar A_\eps(t,r)+2 M_\eps(r)}|\sigma^{(i)}_\eps(r,X^\eps_r)|^2dr,
$$
where $\sigma^{(i)}_\eps$ is $i$-th row of $\sigma_\eps$.
Because of \eqref{eq-Mbar},
we have
\begin{equation*}
\begin{aligned}
\int_s^t e^{-2\bar A_\eps(t,r)+2 M_\eps(r)}|\sigma^{(i)}_\eps(r,X^\eps_r)|^2dr
\leq Ce^{2\kappa_1\|X^\eps\|}|t-s|.
\end{aligned}
\end{equation*}
Similarly, using the fact $(1-e^{-u})^2\leq u, \forall u>0$, $\int_0^s e^{-\bar A_\eps(s,r)}dr\leq \int_0^s \frac{-\kappa_0(s-r)}{\eps^2}dr\leq \frac{\eps^2}{\kappa_0}$, one has
$$(1-e^{-\bar A_\eps(t,s)}) \int_0^s e^{-\bar A_\eps(s,r)+M_\eps(r)}\sigma^{(i)}_\eps(r,X^\eps_r)dw(r)$$ is an element of sequence of martingale with quadratic variation bounded by
$Ce^{2\kappa_1\|X^\eps\|}|t-s|$.
Therefore, by applying exponential inequality for martingale (see e.g., \cite[Theorem 7.4, p. 44]{Mao97}) for the above two stochastic integrals, we have from \eqref{eq-def-heps} that for all $s\leq s_1\leq s_2\leq t\leq s+\delta$,
$$
\begin{aligned}
\PP\Bigg\{|h_\eps^{(i)}(s_2,s_1)|&\geq \frac {\delta^{1/8}|s_2-s_1|^{1/8}}{d\sqrt{\eps}}+\frac {1}{Cd\sqrt{\eps}\delta^{1/4}|s_2-s_1|^{3/8}}
Ce^{2\kappa_1\|X^\eps\|}|s_2-s_1|
\Bigg\}\\
& \leq4\exp\left\{\frac {-1}{2d^2C\eps \delta^{1/8}|s_2-s_1|^{1/4}}\right\}.
\end{aligned}
$$
Therefore, one has that for all $s\leq s_1\leq s_2\leq t\leq s+\delta$,
\begin{equation}\label{eq-1112}
\begin{aligned}
\PP\Bigg\{|h_\eps^{(i)}(s_2,s_1)|\geq \Bigg(\frac {\delta^{1/8}}{d\sqrt{\eps}}+\frac {e^{2\kappa_1\|X^\eps\|}\delta^{1/4}}{d\sqrt{\eps}} \Bigg)|s_2-s_1|^{1/8}
\Bigg\} \leq4\exp\left\{\frac {-1}{2d^2C\eps \delta^{1/8}|s_2-s_1|^{1/4}}\right\}.
\end{aligned}
\end{equation}
To proceed, we have following
lemma.
\begin{lm}\label{lm-1111}
	Assume that $Y(t)$ is a continuous stochastic process, that $L$ is a random variable, and that there are constants $\alpha_1$ and $\alpha_2>0$ such that
	\begin{equation}\label{eq-1111}
	\PP(|Y(s_2)-Y(s_1)|\geq L|s_2-s_1|^{1/8})\leq \alpha_1\exp\left\{-\frac{\alpha_2}{|s_2-s_1|^{1/4}}\right\},\forall s_2,s_1\in [0,1].
	\end{equation}
	There are constants $C_1$, $C_2$, and $C_3>0$ such that
	$$
	\PP(\sup_{t\in[0,1]}|Y(t)|\geq C_3L)\leq C_1\alpha_1\exp\left\{-C_2\alpha_2\right\}.
	$$
\end{lm}

Now, we obtain from \eqref{eq-1112} and Lemma \ref{lm-1111}
that
\begin{equation*}
\begin{aligned}
\PP\left\{\sup_{t\in [s,s+\delta]}|h_\eps^{(i)}(t,s)|\geq \frac {\hat C_4\delta^{1/8}}{d\sqrt{\eps }}+\frac {\hat C_4e^{2\kappa_1\|X^\eps\|}\delta^{1/4}}{d\sqrt{\eps}}
\right\}\leq \hat C_2\exp\left\{\frac {-\hat C_3}{\eps \delta^{1/8}}\right\},\;i=1,\dots,d,
\end{aligned}
\end{equation*}
for some positive constants $\hat C_2, \hat C_3, \hat C_4$, independent of $\eps,t,s,\delta$,
which implies that
\begin{equation}\label{eq-heps}
\begin{aligned}
\PP\left\{\sup_{t\in [s,s+\delta]}|h_\eps(t,s)|\geq \frac {\hat C_4\delta^{1/8}}{\sqrt{\eps }}+\frac {\hat C_4e^{2\kappa_1\|X^\eps\|}\delta^{1/4}}{\sqrt{\eps}}
\right\}\leq d\hat C_2\exp\left\{\frac {-\hat C_3}{\eps \delta^{1/8}}\right\}.
\end{aligned}
\end{equation}
Combining \eqref{eq-heps},  the fact that $|K_\eps^{(1)}(t,s)|\leq \sqrt\eps e^{-M(t)}|h_\eps(t,s)|$, and \eqref{eq-Mbar}, one has that for all $s,t\in [0,1]$, $0\leq t-s\leq \delta$,
\begin{equation}\label{eq-A1ts}
\begin{aligned}
\PP\left\{\sup_{t\in[s,s+\delta]}|K_\eps^{(1)}(t,s)|\geq \hat C_4\delta^{1/8}e^{\kappa_1\|X^\eps\|}+\hat C_4e^{3\kappa_1\|X^\eps\|}\delta^{1/4}\right\}\leq d\hat C_2\exp\left\{\frac {-\hat C_3}{\eps \delta^{1/8}}\right\}.
\end{aligned}
\end{equation}
From \eqref{eq-A1ts} and the logarithm equivalence principle  \cite[Lemma 1.2.15]{DZ98}, we have the following estimates
\begin{equation}\label{eq-A1ts-1}
\begin{aligned}
\limsup_{\eps\to0}&\sup_{s\in [0,1]}\eps\log\PP\Big(\sup_{s\leq t\leq s+\delta}\left|K_\eps^{(1)}(t,s)\right|>\ell\Big)\\
\leq&\limsup_{\eps\to0}\sup_{s\in [0,1]}\eps\log\Bigg(\PP\left\{\sup_{s\leq t\leq s+\delta}|K_\eps^{(1)}(t,s)|\geq \hat C_4\delta^{1/8}e^{\kappa_1\|X^\eps\|}+\hat C_4e^{3\kappa_1\|X^\eps\|}\delta^{1/4}\right\}\\
&\hspace{5cm}+\PP\left\{\hat C_4\delta^{1/8}e^{\kappa_1\|X^\eps\|}\geq \frac {\ell}2\right\}
+\PP\left\{\hat C_4e^{3\kappa_1\|X^\eps\|}\delta^{1/4}\geq \frac {\ell}2\right\}\Bigg)\\
=&\limsup_{\eps\to0}\eps\log\Bigg(d\hat C_2\exp\Big\{\frac {-\hat C_3}{\eps \delta^{1/8}}\Big\}\vee\PP\left\{\hat C_4\delta^{1/8}e^{\kappa_1\|X^\eps\|}\geq \frac {\ell}2\right\}\vee\PP\left\{\hat C_4e^{3\kappa_1\|X^\eps\|}\delta^{1/4}\geq \frac {\ell}2\right\}\Bigg).
\end{aligned}
\end{equation}
Now, letting $\delta\to0$,
it is  seen that
\begin{equation}\label{eq-A1ts-2}
\lim_{\delta\to 0}\limsup_{\eps\to0}\eps\log d\hat C_2\exp\Big\{\frac {-\hat C_3}{\eps \delta^{1/8}}\Big\}=-\infty,
\end{equation}
and from \eqref{p-cre-1} that
\begin{equation}\label{eq-A1ts-3}
\begin{aligned}
\lim_{\delta\to 0}&\limsup_{\eps\to0}\eps\log\PP\left\{\hat C_4\delta^{1/8}e^{\kappa_1\|X^\eps\|}\geq \frac {\ell}2\right\}\\
&=\limsup_{\delta\to 0}\limsup_{\eps\to0}\eps\log\PP\left\{\hat C_4e^{3\kappa_1\|X^\eps\|}\delta^{1/4}\geq \frac {\ell}2\right\}\Bigg)\\
&=-\infty.
\end{aligned}
\end{equation}
Combining \eqref{eq-A1ts-1}, \eqref{eq-A1ts-2}, and \eqref{eq-A1ts-3} leads to that
\beq\label{eq-A1ts-4}
\lim_{\delta\to 0}\limsup_{\eps\to0}\sup_{s\in [0,1]}\eps\log\PP\Big(\sup_{s\leq t\leq s+\delta}\left|K_\eps^{(1)}(t,s)\right|>\ell\Big)=-\infty.
\eeq

Next, we prove similar results for $K_\eps^{(2)}(t,s)$.
The mean value theorem and
\eqref{eq-Mbar}
imply that
$$
\left |e^{-M_\eps(t)}-e^{-M_\eps(s)}\right|
\leq \kappa_1\|X^\eps\|e^{\kappa_1\|X^\eps\|}|t-s|.
$$
Therefore, for all $s,t\in [0,1]$, $s\leq t\leq s+ \delta$
\beq\label{eq-A2ts}
\sup_{t\in[s,s+\delta]}\left|K_\eps^{(2)}(t,s)\right|\leq  \kappa_1\sqrt\eps\|X^\eps\|e^{\kappa_1\|X^\eps\|}\delta\left|h_\eps(s)\right|.
\eeq
Since
$$
\begin{aligned}
\int_0^s e^{-2\bar A_\eps(s,r)+2 M_\eps(r)}|\sigma^{(i)}(r,\eps^2X^\eps_r)|^2dr
\leq Ce^{2\kappa_1\|X^\eps\|}\int_0^se^{-\frac{2\kappa_0(s-r)}{\eps^2}}dr
\leq \hat C_5\eps^2e^{2\kappa_1\|X^\eps\|}, i=1,\dots,d,
\end{aligned}
$$
where $\hat C_5$ is a finite constant depending only on $\kappa_0$ and function $\sigma$;
by exponential inequality for martingale (see e.g., \cite[Theorem 7.4, p. 44]{Mao97}) again and similar process of getting \eqref{eq-heps},
we have
\beq\label{eq-A2ts-2}
\begin{aligned}
	\PP\left\{|h_\eps(s)|\geq \frac {1}{\sqrt{\eps}}+\frac {1}{\hat C_5\eps^2\sqrt{\eps}} \hat C_5\eps^2e^{2\kappa_1\|X^\eps\|}\right\}\leq2d\exp\left\{-\frac {1}{d\sqrt{\eps}}\cdot \frac {2}{d\hat C_5\sqrt{\eps}\eps^2}\right\} \leq2d\exp\left\{\frac {-2}{\hat C_5d^2\eps^3}\right\}.
\end{aligned}
\eeq
Combining \eqref{eq-A2ts} and \eqref{eq-A2ts-2} enables us to obtain that
\beq
\begin{aligned}
	\PP\left\{\sup_{t\in[s,s+\delta]}|K_\eps^{(2)}(t,s)|\geq \kappa_1\|X^\eps\|e^{\kappa_1\|X^\eps\|}\delta+\kappa_1\|X^\eps\|e^{3\kappa_1\|X^\eps\|}\delta\right\} \leq2d\exp\left\{\frac {-2}{\hat C_5d^2\eps^3}\right\}.
\end{aligned}
\eeq
Therefore, by a similar argument for
getting \eqref{eq-A1ts-4}, we obtain
$$
\lim_{\delta\to 0}\limsup_{\eps\to0}\sup_{s\in [0,1]}\eps\log\PP\Big(\sup_{s\leq t\leq s+\delta}\left|K_\eps^{(2)}(t,s)\right|>\ell\Big)=-\infty.
$$
As a consequence, we have
$$
\lim_{\delta\to 0}\limsup_{\eps\to0}\sup_{s\in [0,1]}\eps\log\PP\Big(\sup_{s\leq t\leq s+\delta}\left|H_\eps(t)-H_\eps(s)\right|>\ell\Big)=-\infty.
$$
\end{proof}

\begin{proof}[Proof of \eqref{p-cre-2}]
Let $\ell>0$ be fixed. Let $\delta\in(0,1)$ be fixed but otherwise arbitrary and $\eps\in(0,1)$.
In what follows, we mainly work with $s,t\in [0,1]$, $0\leq t-s\leq \delta$.
Because of \eqref{qint}, we have
\beq\label{eq-cont-0}
\begin{aligned}
	|X^\eps_t-X^\eps_s|\leq &\left|\int_s^t \frac{b(r,X^\eps_r,\xi_{r/\eps})}{\lambda_\eps\left(r,X^\eps_r\right)}dr\right|+\sqrt\eps\left|\int_s^t \frac{\sigma_\eps(r,X^\eps_r)}{\lambda_\eps\left(r,X^\eps_r\right)}dw(r)\right|+|R_\eps(t)-R_\eps(s)|.
\end{aligned}
\eeq
Since
$$
\left|\int_s^t \frac{b(r,X^\eps_r,\xi_{r/\eps})}{\lambda_\eps\left(r,X^\eps_r\right)}dr\right|\leq C(1+\|X^\eps\|)|t-s|,
$$
it is easily seen from \eqref{p-cre-1} that
\beq\label{eq-cont-1}
\lim_{\delta\to 0}\limsup_{\eps\to0}\sup_{s\in [0,1]}\eps\log\PP\left(\sup_{s\leq t\leq s+\delta}\left|\int_s^t \frac{b(r,X^\eps_r,\xi_{r/\eps})}{\lambda_\eps\left(r,X^\eps_r\right)}dr\right|>\ell\right)=-\infty.
\eeq
By our assumptions on $\lambda$ and $\sigma$, the coefficient of the diffusion
$
\sqrt\eps\int_s^t \frac{\sigma_\eps(r,X^\eps_r)}{\lambda_\eps\left(r,X^\eps_r\right)}dw(r)
$
is uniformly bounded.
Therefore, the Bernstein inequality \cite[pp. 153-154]{RY94} yields
\beq\label{eq-cont-2}
\lim_{\delta\to 0}\limsup_{\eps\to0}\sup_{s\in [0,1]}\eps\log\PP\left(\sup_{s\leq t\leq s+\delta}\sqrt\eps\left|\int_s^t \frac{\sigma_\eps(r,X^\eps_r)}{\lambda_\eps\left(r,X^\eps_r\right)}dw(r)\right|>\ell\right)=-\infty.
\eeq

Next, we  consider the term $|R_\eps(t)-R_\eps(s)|$.
We have
\beq\label{eq-R0}
|R_\eps(t)-R_\eps(s)|\leq \sum_{i=1}^5|R_\eps^{(i)}(t)-R_\eps^{(i)}(s)|.
\eeq
First, it is clear that
\beq\label{eq-R1}
\lim_{\delta\to 0}\limsup_{\eps\to0}\sup_{s\in [0,1]}\eps\log\PP\left(\sup_{s\leq t\leq s+\delta}\left|R_\eps^{(1)}(t)-R_\eps^{(1)}(s)\right|>\ell\right)=-\infty.
\eeq
Second, we have
$$
\begin{aligned}
|R_\eps^{(2)}(t)-R_\eps^{(2)}(s)|\leq& \frac{1}{\lambda_\eps(t,X^\eps_t)}\left|\int_s^t e^{-A_\eps(t,r)}b(r,X^\eps_r,\xi_{r/\eps})dr\right|\\
&+\frac{|\lambda_\eps(t,X^\eps_t)-\lambda_\eps(s,X^\eps_s)|}{\lambda_\eps(t,X^\eps_t)\lambda_\eps(s,X^\eps_s)}\left|\int_0^s e^{-A_\eps(t,r)}b(r,X^\eps_r,\xi_{r/\eps})dr\right|\\
\leq & C(1+\|X^\eps\|)|t-s|+C\left(1+\|X^\eps\|\right)(|t-s|+\eps^2\|X^\eps\|),
\end{aligned}
$$
and then, \eqref{p-cre-1} gives us that
\beq\label{eq-R2}
\lim_{\delta\to 0}\limsup_{\eps\to0}\sup_{s\in [0,1]}\eps\log\PP\left(\sup_{s\leq t\leq s+\delta}\left|R_\eps^{(2)}(t)-R_\eps^{(2)}(s)\right|>\ell\right)=-\infty.
\eeq
Third, one has from definition of $R_\eps^{(3)}$, property of $\lambda_\eps(\cdot,\cdot)$ and \eqref{eq-2.8-1} that
$$
\begin{aligned}
&|R_\eps^{(3)}(t)-R_\eps^{(3)}(s)|\\
&\quad=\left|\int_s^t\frac 1{\lambda^2_\eps(r,X^\eps_r)} \left(\int_0^r e^{-A_\eps(r,r')}b(r',X^\eps_{r'},\xi_{r'/\eps})dr'\right)\left(\nabla_s\lambda_\eps(r,X^\eps_r)+\left\langle \nabla_X\lambda_\eps(r,X^\eps_r),p^\eps_r\right\rangle\right)dr\right|\\
&\quad\leq C (1+\|X^\eps\|)(1+\|H_\eps\|)|t-s|,
\end{aligned}
$$
which combined with Proposition \ref{prop-Heps} and the LDP of $\sqrt\eps\|w\|$ implies that
\beq\label{eq-R3}
\lim_{\delta\to 0}\limsup_{\eps\to0}\sup_{s\in [0,1]}\eps\log\PP\left(\sup_{s\leq t\leq s+\delta}\left|R_\eps^{(3)}(t)-R_\eps^{(3)}(s)\right|>\ell\right)=-\infty.
\eeq
Next, we have that
$$
\begin{aligned}
|R_\eps^{(4)}(t)-R_\eps^{(4)}(s)|\leq& \frac{1}{\lambda_\eps(t,X^\eps_t)}|H_\eps(t)-H_\eps(s)|+\frac{|\lambda_\eps(t,X^\eps_t)-\lambda_\eps(s,X^\eps_s)|}{\lambda_\eps(t,X^\eps_t)\lambda_\eps(s,X^\eps_s)}|H_\eps(s)|\\
\leq &C|H_\eps(t)-H_\eps(s)|+C\|H_\eps\|(|t-s|+\eps^2\|X^\eps\|).
\end{aligned}
$$
Hence, using Proposition \ref{prop-Hepsts} to take care of the term $|H_\eps(t)-H_\eps(s)|$, and using \eqref{p-cre-1} and Proposition \ref{prop-Heps} to take care of the term $\|H_\eps\|(|t-s|+\eps^2\|X^\eps\|)$, we can obtain
\beq\label{eq-R4}
\lim_{\delta\to 0}\limsup_{\eps\to0}\sup_{s\in [0,1]}\eps\log\PP\left(\sup_{s\leq t\leq s+\delta}\left|R_\eps^{(4)}(t)-R_\eps^{(4)}(s)\right|>\ell\right)=-\infty.
\eeq
Finally, $R_\eps^{(5)}$ is handled similarly as that of  $R_\eps^{(3)}$. Since
$$
|R_\eps^{(5)}(t)-R_\eps^{(5)}(s)|\leq C \|H_\eps\|(1+\|H_\eps\|)|t-s|,
$$
using  Proposition \ref{prop-Heps} and the LDP of $\sqrt\eps\|w\|$, one has
\beq\label{eq-R5}
\lim_{\delta\to 0}\limsup_{\eps\to0}\sup_{s\in [0,1]}\eps\log\PP\left(\sup_{s\leq t\leq s+\delta}\left|R_\eps^{(5)}(t)-R_\eps^{(5)}(s)\right|>\ell\right)=-\infty.
\eeq
We obtain from \eqref{eq-R0}-\eqref{eq-R5} that
\beq\label{eq-cont-3}
\lim_{\delta\to 0}\limsup_{\eps\to0}\sup_{s\in [0,1]}\eps\log\PP\left(\sup_{s\leq t\leq s+\delta}\left|R_\eps(t)-R_\eps(s)\right|>\ell\right)=-\infty.
\eeq
Combining \eqref{eq-cont-0}, \eqref{eq-cont-1}, \eqref{eq-cont-2}, and \eqref{eq-cont-3}, we get \eqref{p-cre-2}.

\end{proof}

\subsection{Local LDP of $\{X^\eps\}_{\eps>0}$}\label{sec:local}
We begin this section with the following Proposition, which provides a kind of ``exponential equivalence property" of $X^\eps$ and $q^\eps$ in the ``local sense".

\begin{prop}\label{close}
For any
$\theta>0$, $N>0$, and $\varphi \in \CC([0,1],\R^d)$ that is absolutely continuous, there exist $\bar \theta_1,\bar \theta_2>0$, independent of $\eps$ and $\eps_0>0$ such that for any $\eps<\eps_0$,
\begin{equation}\label{Cl-1}
\PP\left\{\|q^\eps-\varphi\|<\theta\right\}\geq \PP\left\{\|X^\eps-\varphi\|<\bar\theta_1\right\}
-\exp\left\{-\dfrac{N}{\eps}\right\},\end{equation}
\begin{equation}\label{Cl-2}\PP\left\{\|X^\eps-\varphi\|<\theta\right\}\geq \PP\left\{\|q^\eps-\varphi\|<\bar\theta_2\right\}
-\exp\left\{-\dfrac{N}{\eps}\right\}.\end{equation}
Moreover, $\bar\theta_1$ and $\bar\theta_2$ are
to be specified
later $($see \eqref{eq-5}$)$.

\end{prop}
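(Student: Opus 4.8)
\textbf{Proof plan for Proposition \ref{close}.}
The plan is to compare the representation \eqref{qint} for $X^\eps$ with equation \eqref{qe-000} for $q^\eps$, treat the two drift/diffusion integrals as (Lipschitz) functionals of the path, and show that on the event $\{\|X^\eps-\varphi\|<\bar\theta_1\}$ (respectively $\{\|q^\eps-\varphi\|<\bar\theta_2\}$) the remainder $R_\eps$, together with the difference of the $\xi$-dependent drift integrals evaluated along the two paths, is smaller than a prescribed constant with probability exponentially close to one. Concretely, I would subtract \eqref{qint} from the integral form of \eqref{qe-000}, obtaining $X^\eps_t-q^\eps_t = \int_0^t\big(\frac{b(r,X^\eps_r,\xi_{r/\eps})}{\lambda_\eps(r,X^\eps_r)}-\frac{b(r,q^\eps_r,\xi_{r/\eps})}{\lambda_\eps(r,q^\eps_r)}\big)dr + \sqrt\eps\int_0^t\big(\frac{\sigma_\eps(r,X^\eps_r)}{\lambda_\eps(r,X^\eps_r)}-\frac{\sigma_\eps(r,q^\eps_r)}{\lambda_\eps(r,q^\eps_r)}\big)dw(r) + R_\eps(t)$; using Assumption \ref{asp-1} (Lipschitz $b$, lower bound $\kappa_0$ on $\lambda$, smallness of $\nabla_x\lambda_\eps,\nabla_x\sigma_\eps$) the first integrand is Lipschitz in $|X^\eps_r-q^\eps_r|$, so a Gronwall-type argument gives $\|X^\eps-q^\eps\|\le C\big(\sqrt\eps\,\|\int_0^\cdot(\cdots)dw\| + \|R_\eps\|\big)e^{C}$.

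The key quantitative input is that the driving terms on the right are exponentially negligible: from \eqref{R-def-eq}, each $R_\eps^{(i)}$ carries a factor $e^{-A_\eps(t,s)}$ integrated in $s$ (which contributes $O(\eps^2)$ by the bound $\int_0^t e^{-\kappa_0(t-s)/\eps^2}ds\le \eps^2/\kappa_0$), or a factor $H_\eps$, or a factor $\nabla_x\lambda_\eps = O(\eps^2)$; combining with Proposition \ref{prop-Heps} (which bounds $\|H_\eps\|$ by $\hat C_0\sqrt\eps\|w\|e^{\hat C_0\sqrt\eps\|w\|}$) and the LDP for $\sqrt\eps\|w\|$ in \eqref{LDPW}, one gets $\lim_{L}\limsup_\eps\eps\log\PP(\|R_\eps\|>L^{-1}\text{ or }\sqrt\eps\|\int_0^\cdot(\cdots)dw\|>L^{-1})=-\infty$. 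More precisely, for any $N>0$ there is a constant $\bar\theta$ (depending on $\theta,N$ but not $\eps$) and $\eps_0$ so that $\PP\big(\|X^\eps-q^\eps\|\ge \theta-\bar\theta\big)\le \exp\{-N/\eps\}$ for $\eps<\eps_0$: the stochastic integral is handled by the Bernstein/exponential martingale inequality (its quadratic variation is $O(\eps)$ after the $\sqrt\eps$ prefactor, so deviations of order $\eps^0$ cost $\exp\{-c/\eps\}$), and $\|R_\eps\|$ is handled pathwise as above, localizing on $\{\sqrt\eps\|w\|\le M\}$ whose complement already has probability $\le e^{-N/\eps}$. With this, the set inclusion $\{\|q^\eps-\varphi\|<\theta\}\supseteq\{\|X^\eps-\varphi\|<\bar\theta_1\}\cap\{\|X^\eps-q^\eps\|<\theta-\bar\theta_1\}$ together with the probability bound yields \eqref{Cl-1}; \eqref{Cl-2} is symmetric, estimating $\|X^\eps-q^\eps\|$ in terms of $q^\eps$ staying near $\varphi$. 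The choice of $\bar\theta_1,\bar\theta_2$ in \eqref{eq-5} is dictated by the Gronwall constant $Ce^C$ above.

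The main obstacle is the term $R_\eps^{(4)}(t)=-\frac{1}{\lambda_\eps(t,X^\eps_t)}H_\eps(t)-\int_0^t\frac{1}{\lambda_\eps^2(s,X^\eps_s)}H_\eps(s)(\nabla_s\lambda_\eps+\langle\nabla_X\lambda_\eps,p^\eps_s\rangle)ds$, which does \emph{not} decay in $\eps$ — $\|H_\eps\|$ is only $O(\sqrt\eps\|w\|)$, not small with high probability in the relevant scale. This is exactly where the ``local'' nature of the statement is used: rather than bounding $\|X^\eps-q^\eps\|$ outright, I would compare $H_\eps$ with the frozen-path object $H_\eps^\varphi(t)=\sqrt\eps\int_0^t e^{-A_\eps^\varphi(t)+A_\eps^\varphi(s)}\sigma_\eps(r,\varphi_r)dw(r)$ (well-defined in It\^o's sense because $\varphi$ is deterministic), invoke the exponential estimate $\PP(\sup_t|H_\eps^\varphi(t)|>\ell)\le \bar M_1\exp\{-\bar M_2\ell^2/\eps^2\}$ alluded to before Lemma \ref{lm-Hphi}, and show that on $\{\|X^\eps-\varphi\|<\bar\theta_1\}$ the difference $\|H_\eps-H_\eps^\varphi\|$ is controlled by $\bar\theta_1$ up to an exponentially small event — using the smoothness of $\lambda_\eps,\sigma_\eps$ in Assumption \ref{asp-1} to bound $A_\eps(t,s)-A_\eps^\varphi(t,s)$ by $\kappa_1\int_s^t|X^\eps_r-\varphi_r|dr$ and the stochastic-convolution techniques (freeze $t$, use the exponential martingale inequality, patch via a Lemma \ref{lm-1111}-type chaining argument). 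This substitution is the crux: it replaces the non-adapted $e^{-A_\eps(t)}$ by the deterministic $e^{-A_\eps^\varphi(t)}$, at the price of an error proportional to the radius $\bar\theta_1$, which is precisely what makes the equivalence hold only locally and forces $\bar\theta_1$ to be chosen small relative to $\theta$.
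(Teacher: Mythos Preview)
Your plan is essentially correct and identifies the crucial ingredient: the non-decaying term $H_\eps$ in $R_\eps^{(4)}$ must be replaced by the frozen-path object $H_\eps^\varphi$, whose exponential smallness (Lemma~\ref{lm-Hphi}) does the work, with $\|H_\eps-H_\eps^\varphi\|$ controlled by $\|X^\eps-\varphi\|$ via the $O(\eps^2)$ bound on $\nabla_x\lambda_\eps,\nabla_x\sigma_\eps$. This is exactly the mechanism the paper uses.

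Where your organization differs from the paper is the choice of comparison variable. You run Gronwall on $|X^\eps_t-q^\eps_t|$ after subtracting \eqref{qint} from \eqref{qe-000}; the paper instead introduces two auxiliary ``frozen-coefficient'' processes $X^{\eps,\varphi}$ (solving \eqref{qe} with coefficients evaluated along $\varphi$) and $q^{\eps,\varphi}$ (likewise for \eqref{qe-000}), writes the four-term chain
\[
|X^\eps_t-\varphi_t|\le |X^\eps_t-X^{\eps,\varphi}_t|+|X^{\eps,\varphi}_t-q^{\eps,\varphi}_t|+|q^{\eps,\varphi}_t-q^\eps_t|+|q^\eps_t-\varphi_t|,
\]
and runs Gronwall directly on $\sup_{s\le t}|X^\eps_s-\varphi_s|$. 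The advantage of the paper's route is that the error from $H_\eps-H_\eps^\varphi$ (which is naturally estimated in terms of $|X^\eps_s-\varphi_s|$, not $|X^\eps_s-q^\eps_s|$) feeds straight into the same Gronwall variable, and the middle term $|X^{\eps,\varphi}-q^{\eps,\varphi}|$ is controlled in one line by $C(\eps^2+\|H_\eps^\varphi\|)$ via the identity $\eps^2\ddot X^{\eps,\varphi}=-\lambda_\eps(t,\varphi_t)(\dot X^{\eps,\varphi}-\dot q^{\eps,\varphi})$. Your approach works too, but the claim that \eqref{Cl-2} is ``symmetric'' is slightly misleading: on $\{\|q^\eps-\varphi\|<\bar\theta_2\}$ you still need $\|X^\eps-\varphi\|$ to bound $H_\eps-H_\eps^\varphi$, so you must substitute $|X^\eps_s-\varphi_s|\le |X^\eps_s-q^\eps_s|+\bar\theta_2$ and check that the resulting $\sup$-term carries a small ($O(\eps^2)$) coefficient so it can be absorbed before Gronwall---exactly the self-referential closure the paper performs in \eqref{eq-X-phi-0}--\eqref{eq-X-phi-01}. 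Once you make that explicit, the two arguments are interchangeable.
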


\begin{proof}
For each $\varphi(\cdot)\in \CC([0,1],\R^d)$ that
is absolutely continuous, we denote by $X^{\eps,\varphi}_t$ the solution of
\begin{equation}\label{3.1}
\begin{cases}
\eps^2\ddot{X}^{\eps,\varphi}_t=b(t,\varphi_t,\xi_{t/\eps})-\lambda_\eps(t,\varphi_t)\dot{X}^{\eps,\varphi}_t+\sqrt{\eps}\sigma_\eps(t,\varphi_t)\dot w(t),\\
X^{\eps,\varphi}_0=x_0\in\R^d;\;\;\;\;\;\dot{X}^\varphi_\eps(0)=x_1\in \R^d,
\end{cases}
\end{equation}
and by $q^{\eps,\varphi}_t$ the solution of
\begin{equation}\label{3.2}
\begin{cases}
\dot{q}^{\eps,\varphi}_t=\dfrac{b(t,\varphi_t,\xi_{t/\eps})}{\lambda_\eps(t,\varphi_t)}+\sqrt{\eps}\dfrac{\sigma_\eps(t,\varphi_t)}{\lambda_\eps(t,\varphi_t)}\dot {w}(t),\\
q^{\eps,\varphi}_0=x_0\in\R^d.
\end{cases}
\end{equation}
Recall that
$C$ is a generic positive constant whose value may change for different appearances. The constant $C$ may depend on initial values $x_0,x_1$ and coefficients $b,\lambda,\sigma$, but is independent of $\eps$ and  $\varphi$. Note that the time variable $t$ is always assumed to be
in $[0,1].$
Now, it is readily seen that
\begin{equation}\label{est}
\abs{X^\eps_t-\varphi_t}\leq \abs{X^\eps_t-X^{\eps,\varphi}_t}+\abs{X^{\eps,\varphi}_t-q^{\eps,\varphi}_t}+\abs{q^{\eps,\varphi}_t-q^\eps_t}+\abs{q^\eps_t-\varphi_t}.
\end{equation}

\noindent\textbf{Step 1: Estimate of $\abs{X^\eps_t-X^{\eps,\varphi}_t}$.}
The following  decomposition will be used often in the proof
$$
u(t)v(t)-u(s)v(s)=u(t)(v(t)-v(s))+v(s)(u(t)-u(s)).
$$
As in Section \ref{sec:rep}, we have
\begin{equation}\label{3.4}
X^{\eps,\varphi}_t=x_0+x_1\int_0^t e^{-A^{\varphi}_\eps(s)}ds+\dfrac 1{\eps^2}\int_0^t\int_0^s e^{-A^\varphi_\eps(s,r)}b(r,\varphi_r,\xi_{r/\eps})dr ds+\dfrac 1{\eps^2}\int_0^t H^\varphi_\eps(s)ds,
\end{equation}
where
$$A_\eps^\varphi(t,s)=\dfrac 1{\eps^2}\int_s^t\lambda_\eps(r,\varphi_r)dr,\quad A_\eps^\varphi(t)=A_\eps^\varphi(t,0),$$
$$H_\eps^\varphi(t)=\sqrt{\eps} e^{-A_\eps^\varphi(t)}\int_0^t e^{A^\varphi_\eps(r)}\sigma_\eps(r,\varphi_r)dw(r).$$
Thus, using integration by parts, we have
\begin{equation}\label{qint-varphi}
\begin{aligned}
X^{\eps,\varphi}_t=x_0+\int_0^t \frac{b(s,\varphi_s,\xi_{s/\eps})}{\lambda_\eps\left(s,\varphi_s\right)}ds+\sqrt\eps\int_0^t \frac{\sigma_\eps(s,\varphi_s)}{\lambda_\eps\left(s,\varphi_s\right)}dw(s)+R_\eps^\varphi(t),
\end{aligned}
\end{equation}
where
$$
\begin{aligned}
R_\eps^\varphi(t):=&x_1\int_0^t e^{-A_\eps^\varphi(s)}ds-\frac 1{\lambda_\eps(t,\varphi_t)}\int_0^t e^{-A_\eps^\varphi(t,s)}b(s,\varphi_s,\xi_{s/\eps})ds\\
&-\int_0^t\frac 1{\lambda^2_\eps(s,\varphi_s)} \left(\int_0^s e^{-A_\eps^\varphi(s,r)}b(r,\varphi_r,\xi_{r/\eps})dr\right)
\left(\nabla_s\lambda_\eps(s,\varphi_s)+\left\langle \nabla_X\lambda_\eps(s,\varphi_s),\dot{\varphi}_s\right\rangle\right)ds\\
&-\dfrac 1{\lambda_\eps(t,\varphi_t)}H_\eps^\varphi(t)-\int_0^t \dfrac 1{\lambda^2_\eps(s,\varphi_s)}H_\eps^\varphi(s)
\left(\nabla_s\lambda_\eps(s,\varphi_s)+\left\langle \nabla_X\lambda_\eps(s,\varphi_s),\dot{\varphi}_s\right\rangle\right)ds\\
=:&\sum_{i=1}^5 R_\eps^{\varphi,(i)}(t).
\end{aligned}
$$
Therefore, we obtain from \eqref{qint-1}, \eqref{3.4}, and an integration by parts formula (applied to the stochastic integral only) that
\beq\label{eq-XXphi-0}
\begin{aligned}
|X^\eps_t-X^{\varphi,\eps}_t|\leq& \left|\dfrac 1{\eps^2}\int_0^t\int_0^s \left(e^{-A_\eps(s,r)}b(r,X^\eps_r,\xi_{r/\eps})-e^{-A^\varphi_\eps(s,r)}b(r,\varphi_r,\xi_{r/\eps})\right)dr ds\right|\\
&+|D_1^\varphi(t)|+
\left|R_\eps^{(1)}(t)-R_\eps^{\varphi,(1)}(t)\right|
+\left|R_\eps^{(4)}(t)-R_\eps^{\varphi,(4)}(t)\right|
+\left|R_\eps^{(5)}(t)-R_\eps^{\varphi,(5)}(t)\right|,
\end{aligned}
\eeq
where
$$
D_1^\varphi(t):=\sqrt\eps\int_0^t\left( \frac{\sigma_\eps(s,X^\eps_s)}{\lambda_\eps\left(s,X^\eps_s\right)}-\frac{\sigma_\eps(s,\varphi_s)}{\lambda_\eps\left(s,\varphi_s\right)}\right)dw(s).
$$
From the fact $A_\eps(s),A_\eps^\varphi(s)\geq \frac{\kappa_0 s}{\eps^2}$,
and the property of $\lambda$,
we can obtain that
\begin{equation}\label{3.5}
\begin{aligned}
\abs{e^{-A_\eps(s)}-e^{-A_\eps^\varphi(s)}}
\leq&
C e^{\frac{-\kappa_0 s}{\eps^2}}\cdot\frac{1}{\eps^2}\int_0^s\eps^2\abs{X^\eps_r-\varphi_r}dr=Ce^{\frac{-\kappa_0 s}{\eps^2}}\int_0^s\abs{X^\eps_r-\varphi_r}dr.
\end{aligned}
\end{equation}
Therefore,
we obtain from the Lipschitz property of the coefficients and \eqref{3.5} that
\begin{equation}\label{3.6}
\begin{aligned}
\int_0^s &\abs{e^{-A_\eps(s,r)}b(r,X^\eps_r,\xi_{r/\eps})-e^{-A^\varphi_\eps(s,r)}b(r,\varphi_r,\xi_{r/\eps})}dr
\\&\leq \int_0^s e^{-A_\eps(s,r)}\abs{b(r,X^\eps_r,\xi_{r/\eps})-b(r,\varphi_r,\xi_{r/\eps})}dr
+\int_0^s \abs{b(r,\varphi_r,\xi_{r/\eps})}\abs{e^{-A_\eps(s,r)}-
e^{-A^\varphi_\eps(s,r)}}dr\\
&\leq C(1+\|\varphi\|)\eps^2\sup_{0\leq r\leq s}\abs{X^\eps_r-\varphi_r}.
\end{aligned}
\end{equation}
A consequence of \eqref{3.5} is that
\beq\label{eq-XXphi-1}
\left|R_\eps^{(1)}(t)-R_\eps^{\varphi,(1)}(t)\right|\leq C\eps^2\sup_{s\in[0,t]}|X^\eps_s-\varphi_s|,
\eeq
and
by
\eqref{3.6}
\beq\label{eq-XXphi-2}
\left|\dfrac 1{\eps^2}\int_0^t\int_0^s \left(e^{-A_\eps(s,r)}b(r,X^\eps_r,\xi_{r/\eps})-e^{-A^\varphi_\eps(s,r)}b(r,\varphi_r,\xi_{r/\eps})\right)dr ds\right|\leq C(1+\|\varphi\|)\int_0^t\sup_{0\leq r\leq s}|X^\eps_r-\varphi_r|ds.
\eeq

It is well-known that in a compact interval, an absolutely continuous function $\varphi$ is also of bounded variation.
Moreover, $\varphi$ is differentiable almost everywhere and if we denote
 the derivative by $\dot\varphi$,
 then $\dot\varphi$ is integrable.
 Hence, the argument in the proof of Proposition \ref{prop-Heps} enables us to conclude that \eqref{bypart1}-\eqref{divide}
are valid for the function $\varphi(\cdot)$.
 As a result, by using \eqref{bypart1} for $e^{A_\eps(s)}\sigma_\eps(s,X^\eps_s)$ and $w(s)$, and $e^{A_\eps^\varphi(s)}\sigma_\eps(s,\varphi_s)$ and $w(s)$ to estimate pathwise stochastic integrals $$\int_0^te^{A_\eps(s)}\sigma_\eps(s,X^\eps_s)dw(s)\ \hbox{ and }\ \int_0^te^{A_\eps^\varphi(s)}\sigma_\eps(s,\varphi_s)dw(s),$$ we
 obtain
\beq
\begin{aligned}
&\!\!\! H_\eps(t)-H^\varphi_\eps(t)\\
&\! =\sqrt\eps\left(\sigma_\eps(t,X^\eps_t)-\sigma_\eps(t,\varphi_t)\right)w(t)\\
&\ -\sqrt\eps\int_0^t w(s)\left(e^{-A_\eps(t,s)}-e^{-A_\eps^\varphi(t,s)}\right)\left[\dfrac{\lambda_\eps(s,X^\eps_s)}{\eps^2}\sigma_\eps(s,X^\eps_s)+\nabla_s\sigma_\eps(s,X^\eps_s)+\nabla_X\sigma_X(s, X^\eps_s)p^\eps_s\right]ds\\
&\ +\sqrt\eps\int_0^t w(s)e^{-A_\eps^\varphi(t,s)}\Bigg[\dfrac{\lambda_\eps(s,\varphi_s)}{\eps^2}\sigma_\eps(s,X^\eps_s)
-\dfrac{\lambda_\eps(s,X^\eps_s)}{\eps^2}\sigma(s,\eps^2X^\eps_s)
\Bigg]ds\\
&\ +\sqrt\eps\int_0^t w(s)e^{-A_\eps^\varphi(t,s)}\Bigg[
\dfrac{\lambda_\eps(s,\varphi_s)}{\eps^2}\sigma_\eps(s,\varphi_s)-\dfrac{\lambda_\eps(s,\varphi_s)}{\eps^2}\sigma_\eps(s,X^\eps_s)\\
&\hspace{4cm}\nabla_s\sigma_\eps(s,\varphi_s)+\nabla_X\sigma_\eps(s,\varphi_s)\dot{\varphi}_s
-\nabla_s\sigma_\eps(s,X^\eps_s)-\nabla_X\sigma_\eps(s, X^\eps_s)p^\eps_s\Bigg]ds\\
&\! =:\sum_{i=1}^4 B_{\eps}^{\varphi,(i)}(t).
\end{aligned}
\eeq
Note that
\beq\label{eq-B1}
|B_{\eps}^{\varphi,(1)}(t)|\leq C \sqrt\eps\eps^2\|w\||X^\eps_t-\varphi_t|.
\eeq
Using \eqref{eq-2.8-1} and \eqref{3.5}, we have
\beq\label{eq-B2-1}
\begin{aligned}
|B_{\eps}^{\varphi,(2)}(t)|\leq& C\sqrt\eps\|w\|\left(\frac 1{\eps^2}+1+\|H_\eps\|\right)\int_0^t \left|e^{-A_\eps(t,s)}-e^{-A_\eps^\varphi(t,s)}\right|ds\\
\leq &C\sqrt\eps\|w\|\left(\frac 1{\eps^2}+\|H_\eps\|\right)\sup_{s\in[0,t]}|X^\eps_s-\varphi_s|\int_0^t e^{\frac{-\kappa_0(t-s)}{\eps^2}}(t-s)ds.
\end{aligned}
\eeq
A change of variable  leads to
\begin{equation}\label{eq-B2-2}
\int_0^t \exp\left\{\frac{-\kappa_0 s}{\eps^2}\right\}\cdot\frac{s}{\eps^2}ds=\eps^2\int_0^{\frac t{\eps^2}}e^{-\kappa_0 r}rdr
\leq C\eps^2.
\end{equation}
Combining \eqref{eq-B2-1} and \eqref{eq-B2-2} implies that
\beq\label{eq-B2}
|B_{\eps}^{\varphi,(2)}(t)|\leq C\sqrt\eps\|w\|\left(\eps^2+\eps^4\|H_\eps\|\right)\sup_{s\in[0,t]}|X^\eps_s-\varphi_s|.
\eeq
Next, it is readily seen that
\beq\label{eq-B3}
|B_{\eps}^{\varphi,(3)}(t)|\leq C\sqrt\eps\|w\|\int_0^t|X^\eps_s-\varphi_s|ds.
\eeq
On the other hand, we have
\beq\label{eq-B4-0}
\begin{aligned}
B_\eps^{\varphi,(4)}(t)=&\sqrt\eps e^{-A_\eps^\varphi(t)}\int_0^t w(s)d e^{A_\eps^\varphi(s)}\left(\sigma_\eps(s,\varphi_s)-\sigma_\eps(s,X^\eps_s)\right)\\
=& \sqrt\eps w(t)\left(\sigma_\eps(t,\varphi_t)-\sigma_\eps(t,X^\eps_t)\right)-\sqrt\eps\int_0^te^{-A_\eps^\varphi(t,s)}\left(\sigma_\eps(s,\varphi_s)-\sigma_\eps(s,X^\eps_s)\right)dw(s).
\end{aligned}
\eeq
Thus, we obtain from \eqref{eq-B4-0} that
\beq\label{eq-B4}
\begin{aligned}
|B_\eps^{\varphi,(4)}(t)|\leq C \sqrt\eps\eps^2\|w\||X^\eps_t-\varphi_t|+|D^\varphi_2(t)|,
\end{aligned}
\eeq
where
$$
D^\varphi_2(t):=\sqrt\eps
\int_0^te^{-A_\eps^\varphi(t,s)}
\left(\sigma_\eps(s,\varphi_s)-\sigma_\eps(s,X^\eps_s)\right)dw(s).
$$
Combining \eqref{eq-B1}, \eqref{eq-B2}, \eqref{eq-B3}, and \eqref{eq-B4} implies that
\begin{equation}\label{eq-HxHphi}
\begin{aligned}
|H_\eps(t)-H^\varphi_\eps(t)|\leq& C\sqrt\eps\eps^2\|w\|\left(1+\eps^2\|H_\eps\|\right)\sup_{s\in[0,t]}|X^\eps_s-\varphi_s|\\
&+C\sqrt\eps\|w\|\int_0^t|X^\eps_s-\varphi_s|ds+C|D^\varphi_2(t)|.
\end{aligned}
\end{equation}
Therefore, a standard calculation allows us to obtain that
\beq\label{eq-XXphi-3}
\begin{aligned}
\left|R_\eps^{(4)}(t)-R_\eps^{\varphi,(4)}(t)\right|\leq &\left|\left(\dfrac 1{\lambda_\eps(t,X^\eps_t)}-\dfrac 1{\lambda_\eps(t,\varphi_t)}\right)H_\eps^\varphi(t)+\dfrac 1{\lambda_\eps(t,X^\eps_t)}(H_\eps(t)-H_\eps^\varphi(t)) \right|\\
\leq&C\eps^2\|H_\eps^\varphi\||X^\eps_t-\varphi_t|+ C\sqrt\eps\eps^2\|w\|\left(1+\eps^2\|H_\eps\|\right)\sup_{s\in[0,t]}|X^\eps_s-\varphi_s|\\
&+C\sqrt\eps\|w\|\int_0^t|X^\eps_s-\varphi_s|ds+C|D^\varphi_2(t)|.
\end{aligned}
\eeq
Next, we have
$$
\begin{aligned}
R_\eps^{(5)}(t)-R_\eps^{\varphi,(5)}(t)=&\int_0^t \left(\dfrac {H_\eps(s)
\nabla_s\lambda_\eps(s,X^\eps_s)}{\lambda^2_\eps(s,X^\eps_s)}-\dfrac {H_\eps^\varphi(s)
\nabla_s\lambda_\eps(s,\varphi_s)}{\lambda^2_\eps(s,\varphi_s)}\right)ds\\
&+\int_0^t\left(\frac{H_\eps(s)\left\langle \nabla_X\lambda_\eps(s,X^\eps_s),p^\eps_s\right\rangle}{\lambda^2_\eps(s,X^\eps_s)}-\frac{H_\eps^\varphi(s)\left\langle \nabla_X\lambda_\eps(s,\varphi_s),\dot{\varphi}_s\right\rangle}{\lambda^2_\eps(s,\varphi_s)}\right)ds\\
=:&B_{\eps}^{\varphi,(5)}(t)+B_\eps^{\varphi,(6)}(t).
\end{aligned}
$$
It can be seen that
$$
|B_{\eps}^{\varphi,(5)}(t)|\leq C\int_0^t |H_\eps(s)-H_\eps^\varphi(s)|ds+C\eps^2\|H_\eps^\varphi\|\int_0^t |X^\eps_s-\varphi_s|ds.
$$
On the other hand, using \eqref{eq-2.8-1}, we get
$$
|B_{\eps}^{\varphi,(6)}(t)|\leq C(1+\|H_\eps\|)\int_0^t |H_\eps(s)-H_\eps^\varphi(s)|ds+C\Big[1+\eps^2\int_0^1 |\dot\varphi_s|ds+\|H_\eps\|\Big]\|H_\eps^\varphi\|.
$$
These equations imply
\beq\label{eq-XXphi-4}
\begin{aligned}
|R_\eps^{(5)}(t)-R_\eps^{\varphi,(5)}(t)|\leq& C(1+\|H_\eps\|)\int_0^t |H_\eps(s)-H_\eps^\varphi(s)|ds+C\eps^2\|H_\eps^\varphi\|\int_0^t |X^\eps_s-\varphi_s|ds\\
&+C\Big[1+\eps^2\int_0^1 |\dot\varphi_s|ds+\|H_\eps\|\Big]\|H_\eps^\varphi\|.
\end{aligned}
\eeq
Hence, by combining \eqref{eq-XXphi-0}, \eqref{eq-XXphi-1}, \eqref{eq-XXphi-2}, \eqref{eq-XXphi-3}, \eqref{eq-XXphi-4}, and \eqref{eq-HxHphi},
we obtain
\begin{equation}\label{eq-XXphi}
\barray\disp
\abs{X^\eps_t-X^{\eps,\varphi}_t}\ad\leq  C\eps^2(1+\sqrt\eps\|w\|)\left(1+\|H_\eps\|\right)^2(1+\|H_\eps^\varphi\|)\sup_{s\in[0,t]}|X^\eps_s-\varphi_s|\\
\aad \ +C(1+\|\varphi\|)\left[(1+\sqrt\eps\|w\|)(1+\|H_\eps\|)+\eps^2\|H_\eps^\varphi\|\right]\int_0^t\sup_{r\in[0,s]}|X^\eps_r-\varphi_r|ds\\
\aad \
+C\Big[1+\eps^2\int_0^1 |\dot\varphi_s|ds+\|H_\eps\|\Big]\|H_\eps^\varphi\|
+C\sup_{r\in [0,1]}\left(|D_1^\varphi(r)|+|D^\varphi_2(r)|\right).
\earray
\end{equation}

\noindent \textbf{Step 2: Estimate of $\abs{X^{\eps,\varphi}_t-q^{\eps,\varphi}_t}$.}
In views of \eqref{3.1} and \eqref{3.2}, we have
\begin{equation*}
\begin{aligned}
\eps^2\ddot{X}^{\eps,\varphi}_t=-\lambda_\eps(t,\varphi_t)(\dot{X}^{\eps,\varphi}_t-\dot{q}^{\eps,\varphi}_t).
\end{aligned}
\end{equation*}
Therefore,
\begin{equation}\label{3.7'}
\begin{aligned}
\abs{X^{\eps,\varphi}_t-q^{\eps,\varphi}_t}=\eps^2 \abs{\int_0^t \dfrac{\ddot{X}^{\eps,\varphi}_s}{\lambda_\eps(s,\varphi_s)}ds} \leq \dfrac {\eps^2}{\kappa_0 }\abs{\int_0^t \ddot{X}^{\eps,\varphi}_sds}=\dfrac {\eps^2}{\kappa_0 }\abs{p^{\eps,\varphi}_t-x_1},
\end{aligned}
\end{equation}
where $p^{\eps,\varphi}_t$ is derivative of $X^{\eps,\varphi}_t$ defined similarly to $p^\eps_t$.
As a consequence of \eqref{3.7'} and \eqref{eq-2.8-1}, we have
\begin{equation}\label{eq-Xphiqphi}
\begin{aligned}
\abs{q^{\eps,\varphi}_t-X^{\eps,\varphi}_t}&\leq C(\eps^2+\norm {H_\eps^\varphi}).
\end{aligned}
\end{equation}

\noindent \textbf{Step 3: Estimate of $\abs{q^{\eps,\varphi}_t-q^\eps_t}$.}
Note that
\eqref{3.1} and \eqref{3.2} imply
\begin{equation*}
\begin{cases}
\dot{q}^{\eps,\varphi}_t-\dot{q}^\eps_t=
\left(\dfrac{b(t,\varphi_t,\xi_{t/\eps})}{\lambda_\eps(t,\varphi_t)}
-\dfrac{b(t,q^{\eps}_t,\xi_{t/\eps})}{\lambda_\eps(t,q_t^\eps)}\right)
+\sqrt{\eps}\left(\dfrac{\sigma_\eps(t,\varphi_t)}{\lambda_\eps(t,\varphi_t)}-\dfrac{\sigma_\eps(t,q_t^\eps)}{\lambda_\eps(t,q_t^\eps)}\right)\dot w(t),\\
q^{\eps,\varphi}_0=q^\eps_0=x_0.
\end{cases}
\end{equation*}
Therefore, one has
\begin{equation}\label{eq-qphiq}
\begin{aligned}
\abs{q^{\eps,\varphi}_t-q^\eps_t}\leq C\Big(\eps^2\|\varphi\|+1\Big)\|q^\eps-\varphi\|+|D_3^\varphi(t)|,
\end{aligned}
\end{equation}
where
$$
D_3^\varphi(t):=\sqrt{\eps}\int_0^t\left(\dfrac{\sigma(s,\eps^2\varphi_s)}{\lambda(s,\eps^2\varphi_s)}-\dfrac{\sigma(s,0)}{\lambda(s,0)}\right)dw(s).
$$

\noindent \textbf{Step 4:
Estimates of $\|X^\eps-\varphi\|$.} Applying \eqref{eq-XXphi}, \eqref{eq-Xphiqphi}, and \eqref{eq-qphiq} to \eqref{est}, we have
\begin{equation}\label{eq-X-phi-0}
\barray\disp
\sup_{r\in[0,t]}\abs{X^\eps_r-\varphi_r}\ad\leq
\wdt C\eps^2(1+\|\varphi\|+\|\varphi\|^2)
+\wdt C\Big(\eps^2\|\varphi\|+1\Big)\|q^\eps-\varphi\|+\wdt C\Big[1+\eps^2\int_0^1 |\dot\varphi_s|ds+\|H_\eps\|\Big]\|H_\eps^\varphi\|\\
\aad\ +\wdt C\eps^2(1+\sqrt\eps\|w\|)\left(1+\|H_\eps\|\right)^2(1+\|H_\eps^\varphi\|)\sup_{s\in[0,t]}|X^\eps_s-\varphi_s|\\
\aad \ +\wdt C(1+\|\varphi\|) \left[(1+\sqrt\eps\|w\|)(1+\|H_\eps\|)+\eps^2\|H_\eps^\varphi\|\right]\int_0^t\sup_{r\in[0,s]}|X^\eps_r-\varphi_r|ds\\
\aad \
+\wdt C\sup_{r\in[0,1]}\left(|D_1^\varphi(r)|+|D^\varphi_2(r)|+|D_3^\varphi(r)|\right),
\earray
\end{equation}
for a positive finite constant $\wdt C$,
independent of $\eps$  and $\varphi$.

\

\noindent\textbf{Final Step.}
To proceed, we need a couple of lemmas. To avoid interruption, the proofs of these lemmas are relegated to the appendix.

\begin{lm}\label{lm-Hphi}
	There are constants $\bar M_1$ and $\bar M_2$ independent of $\ell$ and $\eps$ such that
\begin{equation}\label{eq-Hphi}
\PP\left\{\sup_{t\in[0,1]}|H_\eps^\varphi(t)|>\ell\right\}\leq \bar M_1\exp\left\{-\frac{\bar M_2\ell^2}{\eps^2}\right\},\text{ for all }t,s\in [0,1], \ 0<\eps<1, \ell>0.
\end{equation}
\end{lm}

\begin{lm}\label{lm-Dphi}
	There is a constant $\bar M_3$ independent of $\eps$ such that
	$$ \PP\left\{\sup_{t\in[0,1]}\Big(|D_1^\varphi(t)|+|D_2^\varphi(t)|+|D_3^\varphi(t)|\Big)\geq \eps+\bar M_3\eps^2(\|X^\eps\|+\|\varphi\|)^2\right\}\leq \exp\left\{\frac{-1}{\eps^2}\right\}.
	$$
\end{lm}

With the two lemmas at hand, we proceed to complete the proof of the proposition.
Now, let $\theta, N>0$ be arbitrary and fixed.
By the LDP for the Brownian motion $w(\cdot)$ and Proposition \ref{prop-Heps}, there exists a
constant $L=L(N)>0$ and $\eps_1=\eps_1(L)\in (0,1)$ such that
\begin{equation}\label{eq-f-1}
\PP(\Omega^1_\eps)\leq \exp\left\{-\dfrac{3N}{\eps}\right\}, \quad \Omega^1_\eps:=\{\sqrt\eps\norm w+\|H_\eps\|+\|X^\eps\|>L\}\text{ for all }\eps<\eps_1.
\end{equation}
In view of Lemma \ref{lm-Dphi}, there is an $\eps_2=\eps_2(N,\bar M_3)\in (0,1)$ satisfying
\begin{equation}\label{eq-f-2}
\PP(\Omega^2_\eps)\leq \exp\left\{-\dfrac{3N}{\eps}\right\}, \quad \Omega^2_\eps:=\{\|D_1^\varphi\|+\|D_2^\varphi\|+\|D_3^\varphi\|>\eps+\bar M_3\eps^2(\|X^\eps\|+\|\varphi\|)^2\}\text{ for all }\eps<\eps_2.
\end{equation}
There is a small $\ell=\ell(N,\theta,\varphi)\in(0,1)$ satisfying
\begin{equation}\label{eq-f-3}
	\wdt C\ell\Big[1+\int_0^1 |\dot\varphi_s|ds+L\Big]e^{2\wdt C(1+\|\varphi\|)[(1+L)^2+1]}\leq \frac{\theta}{8}.
\end{equation}
By Lemma \ref{lm-Hphi}, there is an  $\eps_3=\eps_3(\ell,\bar M_1,\bar M_2,N)\in (0,1)$ such that
\begin{equation}\label{eq-f-4}
\PP(\Omega^3_\eps)\leq \exp\left\{-\dfrac{3N}{\eps}\right\}, \quad \Omega^3_\eps:=\{\|H_\eps^\varphi\|>\ell\}\text{ for all }\eps<\eps_3.
\end{equation}
There is an $\eps_4=\eps_4(N,\theta,\varphi)\in (0,1)$ such that for all $\eps<\eps_4$
\beq\label{eq-f-5}
\begin{aligned}
&\wdt C\eps^2(1+L)^3<\frac 14,\;
\wdt C\eps(2+\|\varphi\|+\|\varphi\|^2)e^{2\wdt C(1+\|\varphi\|)[(1+L)^2+1]}\leq \frac{\theta}8
\text{ and }\\
&\wdt C\eps[1+\bar M_3(L+\|\varphi\|)^2]e^{2\wdt C(1+\|\varphi\|)[(1+L)^2+1]}\leq \frac{\theta}8.
\end{aligned}
\eeq
Let
\begin{equation}\label{eq-5}
\bar \theta_1=\dfrac{\theta}{8\wdt C\Big(\eps^2\|\varphi\|+1\Big)\exp\{2\wdt C(1+\|\varphi\|)[(1+L)^2+1\}},
\end{equation}
and
$$
\Omega^0_\eps=\left\{\|q^\eps-\varphi\|<\bar \theta_1\right\}\setminus\left(\cup_{i=1}^3\Omega^i_\eps\right).
$$
Then it is clear
that $\bar\theta_1$ is independent of $\eps$. Moreover,
 note that
$$
\PP\left(\cup_{i=1}^3\Omega^i_\eps\right)\leq 3\exp\left\{-\frac{3N}{\eps}\right\}\leq \exp\left\{-\frac{N}{\eps}\right\}
$$
for all $\eps<\eps_5$ for some $\eps_5=\eps_5(N)\in (0,1)$.

Now, for any $\eps<\eps_0:=\min\{\eps_i:i=1,\dots,5\}$ and $\omega\in\Omega^0_\eps$, we have from \eqref{eq-f-1}, \eqref{eq-f-2}, and \eqref{eq-f-4} that
\begin{equation}\label{eq-X-phi-01}
\barray\disp
\sup_{s\in[0,t]}\abs{X^\eps_s-\varphi_s}\ad\leq
2\wdt C\eps(2+\|\varphi\|+\|\varphi\|^2)
+2\wdt C\Big(\eps^2\|\varphi\|+1\Big)\bar \theta_1+\wdt C\ell\Big[1+\int_0^1 |\dot\varphi_s|ds+L\Big]\\
\aad \ +2\wdt C\eps[1+\bar M_3(L+\|\varphi\|)^2]
+2\wdt C\left(1+\|\varphi\|\right)[(1+L)^2+1]\int_0^t\sup_{r\in[0,s]}|X^\eps_r-\varphi_r|ds.
\earray
\end{equation}
Applying Gronwall's inequality
to \eqref{eq-X-phi-01} and then using \eqref{eq-f-3}, \eqref{eq-f-5}, and \eqref{eq-5}, we obtain that
$
\|X^\eps-\varphi\|<\theta,
$
for any $\eps<\eps_0$ and $\omega\in\Omega^0_\eps$.
Therefore, one has that for any $\eps<\eps_0$,
$$
\PP\left\{\|q^\eps-\varphi\|<\theta\right\}\geq \PP\left\{\|X^\eps-\varphi\|<\bar\theta_1\right\}-\exp\left\{-\dfrac{N}{\eps}\right\}.
$$
By noting that
$$\abs{q^\eps_t-\varphi_t}\leq\abs{X^\eps_t-\varphi_t}+ \abs{X^\eps_t-X^{\eps,\varphi}_t}+\abs{X^{\eps,\varphi}_t-q^{\eps,\varphi}_t}
+\abs{q^{\eps,\varphi}_t-q^\eps_t},$$
\eqref{Cl-2}
can be
 obtained.
Therefore,
Proposition \ref{close}
is proved.
\end{proof}

Applying Assumption \ref{asp-2} and Proposition \ref{close} enables us to obtain the following theorem.

\begin{thm}\label{thm3.2}
	The sequence $\{X^\eps\}_{\eps>0}$ satisfies the local LDP.
That is,
for any
$\varphi\in\CC([0,1],\R^d)$, one has
$$
\begin{aligned}
\lim_{\theta\to 0}&\limsup_{\eps\to 0}\eps\log\PP\left(X^\eps\in B(\varphi,\theta)\right)\\
&=\lim_{\theta\to 0}\liminf_{\eps\to 0}\eps\log\PP\left(X^\eps\in B(\varphi,\theta)\right)\\
&=-\hat I(\varphi),
\end{aligned}
$$
where $B(\varphi,\eps)$ is the ball centered at $\varphi$ with radius $\eps$.
\end{thm}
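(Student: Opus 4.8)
The plan is to derive Theorem \ref{thm3.2} from Proposition \ref{close} together with the assumed local LDP for the first-order family $\{q^\eps\}_{\eps>0}$ (Assumption \ref{asp-2}), by showing that the local-in-$\varphi$ sandwich estimates \eqref{Cl-1}--\eqref{Cl-2} transfer the local LDP from $\{q^\eps\}$ to $\{X^\eps\}$. Fix $\varphi\in\CC([0,1],\R^d)$. If $\varphi$ is not absolutely continuous, then $\hat I(\varphi)=\infty$ by Assumption \ref{asp-2}; in this case I would first note that one may reduce to absolutely continuous $\varphi$ because the local LDP is a statement about small balls, and any ball $B(\varphi,\theta)$ contains absolutely continuous functions, so the upper bound $\limsup_\eps \eps\log\PP(X^\eps\in B(\varphi,\theta))\to -\infty$ as $\theta\to0$ follows from the lower semicontinuity of $\hat I$ and the estimates below applied at nearby absolutely continuous points; alternatively one can invoke that $\{X^\eps\}$ is exponentially tight (already proved via \eqref{p-cre-1}--\eqref{p-cre-2}) and that the other three local-LDP-at-absolutely-continuous-points plus exponential tightness already pin down the rate function on the whole space.

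For absolutely continuous $\varphi$, I would argue as follows. \textbf{Upper bound.} Fix $\theta>0$ and $N>0$. By \eqref{Cl-1} in Proposition \ref{close}, there is $\bar\theta_1>0$ (independent of $\eps$) and $\eps_0>0$ such that for $\eps<\eps_0$,
\[
\PP\{X^\eps\in B(\varphi,\bar\theta_1)\}\leq \PP\{q^\eps\in B(\varphi,\theta)\}+\exp\{-N/\eps\}.
\]
Taking $\eps\log$, using $\log(a+b)\leq \log 2+\log(a\vee b)$, and letting $\eps\to0$ gives
\[
\limsup_{\eps\to0}\eps\log\PP\{X^\eps\in B(\varphi,\bar\theta_1)\}\leq \Big(\limsup_{\eps\to0}\eps\log\PP\{q^\eps\in B(\varphi,\theta)\}\Big)\vee(-N).
\]
Now let $\theta\to0$ on the right using the local LDP for $\{q^\eps\}$ (the $\limsup$ half of Definition \ref{def-localLDP}), then let $N\to\infty$; since $\bar\theta_1=\bar\theta_1(\theta,N)$ and $\bar\theta_1\to0$ is not automatic, I would instead run the argument with $\theta$ and $N$ chosen so that $\bar\theta_1$ is as small as desired — inspecting \eqref{eq-5}, $\bar\theta_1$ is a fixed multiple of $\theta$ (up to $\eps$-independent constants depending on $\varphi,N$), so $\bar\theta_1\to0$ as $\theta\to0$ for each fixed $N$. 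Hence
\[
\lim_{\bar\theta_1\to0}\limsup_{\eps\to0}\eps\log\PP\{X^\eps\in B(\varphi,\bar\theta_1)\}\leq -\hat I(\varphi).
\]
\textbf{Lower bound.} Symmetrically, from \eqref{Cl-2} we get, for $\eps$ small,
\[
\PP\{X^\eps\in B(\varphi,\theta)\}\geq \PP\{q^\eps\in B(\varphi,\bar\theta_2)\}-\exp\{-N/\eps\}.
\]
Choosing $N$ large enough that $N>\hat I(\varphi)$ (if $\hat I(\varphi)=\infty$ this step is vacuous and the lower bound is trivial), the subtracted term is exponentially negligible compared with $\PP\{q^\eps\in B(\varphi,\bar\theta_2)\}\approx e^{-\hat I(\varphi)/\eps}$, so
\[
\liminf_{\eps\to0}\eps\log\PP\{X^\eps\in B(\varphi,\theta)\}\geq \liminf_{\eps\to0}\eps\log\PP\{q^\eps\in B(\varphi,\bar\theta_2)\}\geq -\hat I(\varphi)-o(1),
\]
using the $\liminf$ half of the local LDP for $\{q^\eps\}$ and then letting $\bar\theta_2\to0$ (again $\bar\theta_2\to0$ as $\theta\to0$). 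Combining the two bounds and matching the radii (both $\bar\theta_1$ and $\bar\theta_2$ shrink to $0$ as the free parameter $\theta\to0$) yields
\[
\lim_{\theta\to0}\limsup_{\eps\to0}\eps\log\PP\{X^\eps\in B(\varphi,\theta)\}=\lim_{\theta\to0}\liminf_{\eps\to0}\eps\log\PP\{X^\eps\in B(\varphi,\theta)\}=-\hat I(\varphi),
\]
which is exactly the local LDP.

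The main obstacle, and the point that needs care, is the bookkeeping of the radii: Proposition \ref{close} does not give $\PP\{X^\eps\in B(\varphi,\theta)\}$ and $\PP\{q^\eps\in B(\varphi,\theta)\}$ with the \emph{same} radius, only with comparable radii $\bar\theta_1,\bar\theta_2$ that depend on $\theta$, $N$, and on $\varphi$ (through $\|\varphi\|$, $\int_0^1|\dot\varphi_s|ds$, and the constant $L=L(N)$ from the exponential tightness of $w$ and $H_\eps$). One must verify from the explicit formula \eqref{eq-5} that, for fixed $\varphi$ and $N$, $\bar\theta_1$ (and the analogous $\bar\theta_2$) tends to $0$ as $\theta\to0$, and that the exceptional probability $\exp\{-N/\eps\}$ can be made asymptotically irrelevant by first sending $\eps\to0$ and only afterwards sending $N\to\infty$ and $\theta\to0$ in the correct order. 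Once the order of limits is fixed (first $\eps\to0$, then $\theta\to0$ with $\bar\theta_i\to0$, then $N\to\infty$), everything collapses to the local LDP of $\{q^\eps\}$ from Assumption \ref{asp-2}; together with the exponential tightness proven in Section \ref{sec:exp} and Proposition \ref{prop-local-LDP}, this will then also deliver the full LDP asserted in Theorem \ref{thm-main1}.
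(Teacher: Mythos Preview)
Your proposal is correct and follows essentially the same approach as the paper: transfer the local LDP from $\{q^\eps\}$ to $\{X^\eps\}$ via the sandwich estimates \eqref{Cl-1}--\eqref{Cl-2} of Proposition~\ref{close}, taking care with the order of limits and the dependence of $\bar\theta_1,\bar\theta_2$ on $\theta$ and $N$ through \eqref{eq-5}. One small point: for non--absolutely continuous $\varphi$ your option (a) is exactly what the paper does, but it requires an explicit density step (pick absolutely continuous $\bar\phi\in B(\varphi,\bar\theta_2)$, use $B(\varphi,\bar\theta_2)\subset B(\bar\phi,2\bar\theta_2)$, apply Proposition~\ref{close} at $\bar\phi$, then $B(\bar\phi,\theta_2/2)\subset B(\varphi,\theta_2)$) rather than a bare appeal to lower semicontinuity, since Proposition~\ref{close} is only stated for absolutely continuous centers; your option (b) via exponential tightness alone does not directly yield the local LDP at such $\varphi$.
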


\begin{proof} We divide the proof by treating the lower bounds and upper bounds.

\para{Lower bound of local LDPs.} We first
prove
\bea \ad
\lim_{\theta\to 0}\liminf_{\eps\to 0}\eps\log\PP\left(X^\eps\in B(\varphi,\theta)\right) \geq-\hat I(\varphi).
\eea
Since it is trivial if $\hat I(\varphi)=\infty$, we assume that $\hat I(\varphi)<\infty$ and $\varphi$ is absolutely continuous.
For any $r>0$, since $\{q^\eps\}_{\eps>0}$ satisfies the local LDP with rate function $\hat I$, there is a $\theta_1$ such that
$$
\liminf_{\eps\to 0}\eps\log\PP\left(q^\eps\in B(\varphi,\theta_1)\right)\geq -\hat I(\varphi)+2r.
$$
Let $N_1$ be sufficiently large such that
$$
\exp\left\{\frac{-\hat I(\varphi) +2r}{\eps}\right\}-\exp\left\{-\frac {N_1}\eps\right\}\geq \exp\left\{\frac{-\hat I(\varphi)+r}{\eps}\right\}.
$$
By Proposition \ref{close}, there are $\bar\theta_1$ and $\eps_0$ such that for any $\eps<\eps_0$
$$
\PP\left\{\|X^\eps-\varphi\|<\bar\theta_1\right\}\geq
\PP\left\{\|q^\eps-\varphi\|<\theta_1\right\}-\exp\left\{-\dfrac{N_1}{\eps}\right\}.
$$
As a consequence, one concludes that for any $r>0$, there is a $\bar\theta_1$ satisfying
$$
\begin{aligned}
\liminf_{\eps\to 0}\eps\log\PP\left(X^\eps\in B(\varphi,\bar \theta_1)\right)&\geq \liminf_{\eps\to 0}\eps\log\left(\PP\left(q^\eps\in B(\varphi,\theta_1)\right)-\exp\left\{-\frac {N_1}{\eps}\right\}\right)\\
&\geq -\hat I(\varphi)+r.
\end{aligned}
$$
Therefore, we obtain the lower bound for local LDPs.

\para
{Upper bound of local LDPs.}
It is easily seen that if $\varphi$ is absolutely continuous,
a similar argument to the process of obtaining lower bound of local LDP yields that
\bea \ad
\lim_{\theta\to 0}\limsup_{\eps\to 0}\eps\log\PP\left(X^\eps\in B(\varphi,\theta)\right)\\
\aad \ \leq-\hat I(\varphi).
\eea
Now, we consider $\varphi\in \CC([0,1],\R^d)$, which is not absolutely continuous and $I(\varphi)=\infty$. We aim to prove that
$$
\lim_{\theta\to 0}\limsup_{\eps\to 0}\eps\log\PP\left(X^\eps\in B(\varphi,\theta)\right)
=-\infty.
$$
For any $R>0$, since $\{q^\eps\}_{\eps>0}$ satisfies the local LDP with rate function $\hat I$ and $\hat I(\varphi)=\infty$, there is a $\theta_2\in (0,1)$ such that
$$
\limsup_{\eps\to 0}\eps\log\PP\left(q^\eps\in B(\varphi,\theta_2)\right)\leq -R.
$$
Let $N_2>R$.
By Proposition \ref{close} and \eqref{eq-5}, there is a $\bar\theta_2\in (0,\theta_2/2)$ such that for any $\phi\in B(\varphi,1)$, $\phi$ is absolutely continuous and there is $\eps_0=\eps_0(\phi)$ satisfying
$$
\PP\left\{\|X^\eps-\phi\|<2\bar\theta_2\right\}\leq
\PP\left\{\|q^\eps-\phi\|<\frac{\theta_2}2\right\}+\exp\left\{-\dfrac{N_2}{\eps}\right\},\;\forall \eps<\eps_0(\phi).
$$
Let $\bar\phi\in B(\varphi,\bar\theta_2)$ be an absolutely continuous function (such $\bar\phi$ does always exist due to denseness of absolutely continuous functions).
As a consequence, we have
$$
\begin{aligned}
\limsup_{\eps\to 0}\eps\log\PP\left(X^\eps\in B(\varphi,\bar \theta_2)\right)&\leq \limsup_{\eps\to 0}\eps\log\PP\left(X^\eps\in B(\bar\phi,2\bar \theta_2)\right)\\
&\leq \limsup_{\eps\to 0}\eps\log\left(\PP\left(q^\eps\in B(\bar\phi,\frac{\theta_2}2)\right)+\exp\left\{-\frac {N_2}{\eps}\right\}\right)\\
&\leq \limsup_{\eps\to 0}\eps\log\left(\PP\left(q^\eps\in B(\varphi,\theta_2)\right)+\exp\left\{-\frac {N_2}{\eps}\right\}\right)\\
&=\max\left\{\limsup_{\eps\to 0}\eps\log\PP\left(q^\eps\in B(\varphi,\theta_2)\right),-N_2\right\}\\
&\leq-R.
\end{aligned}
$$
Therefore, if $\varphi$ is not absolutely continuous, then
\bea \ad
\lim_{\theta\to 0}\limsup_{\eps\to 0}\eps\log\PP\left(X^\eps\in B(\varphi,\theta)\right)
 =-\infty.
\eea
So, the proof is complete.
\end{proof}

\appendix
\section{Proofs of Technical Results}

\begin{proof}[Proof of Proposition \ref{prop-Heps}]
	If $f\in \CC^1([0,t])$ and $g\in \CC([0,t])$, then the Stiltjies integral
	$$\int_0^t f(s)dg(s),\;\;\;\;\;t\geq 0$$
	is well defined and the following integration by parts formula holds
	\begin{equation}\label{bypart1}
	\int_{t_1}^{t_2} f(s)dg(s)=f(t_2)g(t_2)-f(t_1)g(t_1)-\int_{t_1}^{t_2} g(s)f'(s)ds,\;\;\;0\leq t_1<t_2\leq t.
	\end{equation}
	In addition, if $g(0)=0$, as a consequence of \eqref{bypart1},
	\begin{equation}\label{bypart}
	\int_{0}^{t} f(s)dg(s)=g(t)f(0)+\int_{0}^{t} \left(g(t)-g(s)\right)f'(s)ds,\quad t\geq 0.
	\end{equation}
	Thus, we can apply the integration by parts formula \eqref{bypart} for $f(s)=e^{A_\eps(s)}\sigma(s,\eps^2X^\eps_s)$ and $g(s)=w(s)$
	to get
	\begin{equation}\label{divide}
	\begin{aligned}
	\int_0^t &e^{A_\eps(s)}\sigma_\eps(s,X^\eps_s)dw(s)=\sigma_\eps(0,x_0)w(t)
	\\&\;\;+\int_0^t e^{A_\eps(s)}\left[\dfrac{\lambda_\eps(s,X^\eps_s)}{\eps^2}\sigma_\eps(s,X^\eps_s)+\nabla_s\sigma_\eps(s,X^\eps_s)+\nabla_x\sigma_\eps(s,X^\eps_s)p^\eps_s\right]\left(w(t)-w(s)\right)ds.
	\end{aligned}
	\end{equation}
	Therefore, by multiplying $\sqrt\eps e^{-A_\eps(t)}$ to both sides of \eqref{divide},
	taking the norm on both sides
	of the equation, using boundedness assumptions on $\lambda_\eps$ and $\sigma_\eps$,
	and  carrying out the detailed calculations,
	we obtain
	\begin{equation}\label{2.7}
	\abs{H_\eps(t)} \leq C\sqrt{\eps} \norm {w}\left(1+\frac 1{\eps^2}\int_0^t e^{-A_\eps(t,s)}ds+\eps^2\int_0^te^{-A_\eps(t,s)}|p^\eps_s|ds\right).
	\end{equation}
	Combining \eqref{2.7} and the fact $A_\eps(t,s)\geq \dfrac{\kappa_0 (t-s)}{\eps^2}\geq 0$ implies that
	\beq\label{eq-H}
	\abs{H_\eps(t)} \leq C\sqrt{\eps} \norm {w}\left(1+\eps^2\int_0^t|p^\eps_s|ds\right),\;\forall t\in [0,1].
	\eeq
	
	Now, we are ready to estimate $|p^\eps_s|$.
	Thanks to \eqref{formulap} and the fact $A_\eps(t,s)\geq \dfrac{\kappa_0 (t-s)}{\eps^2}$ again, we have
	\begin{equation}\label{2.8}
	\abs{p^\eps_t}\leq\abs{x_1}e^{-A_\eps(t)}+ \frac C{\eps^2}\int_0^t e^{-\frac{\kappa_0 (t-s)}{\eps^2}}\left(1+\abs{X^\eps_s}\right)ds+\dfrac 1{\eps^2}\abs{H_\eps(t)}.
	\end{equation}
	Because of \eqref{qint-1} and Young's inequality, one has
	$$
	\begin{aligned}
	|X^\eps_t|&\leq C+C\int_0^t (1+|X^\eps_s|)ds+\frac 1{\eps^2}\int_0^t |H_\eps(s)|ds\\
	&\leq C\left(1+\frac 1{\eps^2}\int_0^t|H_\eps(s)|ds\right)+C\int_0^t |X^\eps_s|ds.
	\end{aligned}
	$$
	Then applying Gronwall's inequality leads to
	\begin{equation}\label{eq-xh-1}
	|X^\eps_t|\leq C\left(1+\frac 1{\eps^2}\int_0^t|H_\eps(s)|ds\right).
	\end{equation}
	Hence, we obtain from \eqref{2.8} and \eqref{eq-xh-1}
	that
	\begin{equation}\label{eq-2.8-1}
	\abs{p_\eps(t)}\leq  C\left(1+\frac 1{\eps^2}\sup_{s\in[0,t]}|H_\eps(s)|\right).
	\end{equation}
	Applying \eqref{eq-2.8-1} to \eqref{eq-H}
	yields
	that
	$$
	\sup_{s\in[0,t]}\abs{H_\eps(s)} \leq C\sqrt{\eps} \norm {w}\left(1+\int_0^t\sup_{r\in[0,s]}|H_\eps(r)|ds\right),\;\forall t\in [0,1].
	$$
	Thus, Gronwall's inequality
	implies that
	$$
	\|H_\eps\|\leq \hat C_0\sqrt{\eps} \norm {w}e^{\hat C_0\sqrt{\eps} \norm {w}},
	$$
	for some finite constant $\hat C_0$, independent of $\eps$.
	
	Next, we proceed
	to estimate $X^\eps$ by using the representation \eqref{qint}.
	It is easily seen that for all $t\in [0,1]$,
	\begin{equation}\label{eq-R-1-2-4}
	|R_\eps^{(1)}(t)|\leq C,\quad
	|R_\eps^{(2)}(t)|\leq C\left(1+\int_0^t\sup_{r\in [0,s]}|X^\eps_r|ds\right),\quad
	|R_\eps^{(4)}(t)|\leq C|H_\eps(t)|.
	\end{equation}
	By assumptions on bounded derivative of $\lambda$ and \eqref{eq-2.8-1}, one can obtain that
	\beq
	|R_\eps^{(3)}(t)|\leq C(1+\|H_\eps\|)\int_0^t \sup_{r\in [0,s]}|X^\eps_r|ds.
	\eeq
	Similarly, one has
	\beq
	|R_\eps^{(5)}(t)|\leq C\|H_\eps\|(1+\|H_\eps\|).
	\eeq
	Therefore, we have that
	\beq\label{eq-Reps}
	\begin{aligned}
		|R_\eps(t)|&\leq C(1+\|H_\eps\|+\|H_\eps\|^2)+C(1+\|H_\eps\|)\int_0^t \sup_{r\in [0,s]}|X^\eps_r|ds.
	\end{aligned}
	\eeq
	Moreover, a similar process of getting \eqref{divide} helps us
	to estimate
	$$
	\sqrt\eps\int_0^t \frac{\sigma(s,\eps^2X^\eps_s)}{\lambda\left(s,\eps^2X^\eps_s\right)}dw(s),
	$$
	which together with \eqref{eq-2.8-1}
	implies
	that
	\beq\label{eq-difX}
	\left|\sqrt\eps\int_0^t \frac{\sigma(s,\eps^2X^\eps_s)}{\lambda\left(s,\eps^2X^\eps_s\right)}dw(s)\right|\leq C\sqrt\eps\|w\|(1+\|H_\eps\|).
	\eeq
	It follows from
	\eqref{qint}, \eqref{eq-Reps}, and \eqref{eq-difX} that
	\begin{equation}\label{eq-29}
	\sup_{s\in[0,t]}|X^\eps_s|\leq C(1+\|H_\eps\|+\|H_\eps\|^2)(1+\sqrt\eps\|w\|)+C(1+\|H_\eps\|)\int_0^t \sup_{r\in[0,s]}|X^\eps_r|ds.
	\end{equation}
	Therefore, it follows \eqref{eq-29} and the Gronwall inequality that
	\begin{equation}\label{eq-Xeps}
	\|X^\eps\|\leq C(1+\|H_\eps\|+\|H_\eps\|^2)(1+\sqrt\eps\|w\|)e^{C(1+\|H_\eps\|)}\leq \hat C_1\Gamma(\hat C_1\sqrt\eps\norm{w}),
	\end{equation}
	where
	$\hat C_1$ is a positive finite constant, independent of $\eps$ and
	$$
	\Gamma(v):=(1+ve^v+v^2e^{2v})(1+v)e^{1+ve^v},\;v\geq 0.
	$$
	The proof is complete.
\end{proof}

\begin{proof}[Proof of Lemma \ref{lm-1111}]
	The proof is similar to \cite[Proof of Theorem 4.2]{DKMN09}.
	Define the grid $\G_n=\{\frac i{2^{n}}:0\leq i\leq 2^{n}\}$. Two points $u=\frac i{2^{n}}, v=\frac j{2^{n}}\in\G_n$ are said to be nearest neighbor if $|i-j|\leq 1$.
	Then for any $u\in\G_n$, there exists a path $0 = q_0, v_1,\dots, v_N = u$ of points in
	$\G_n$ such that each pair $v_{i-1}$ and $v_i$ are nearest neighbors in some grid $\G_m$, $m \leq
	n$, and at most one of such pairs consists of points, which are nearest
	neighbors in any given grid $\G_m$. Indeed, we can write
	$u = 0.k_1\dots k_N$ in the binary (base 2) expansion and let $v_m = 0.k_1k_2\dots k_m$.
	Next, let $\mathcal D(n)$ be the event that for all nearest
	neighbors  $u,v\in\G_n$, we have
	$|Y(u) - Y(v)| \leq L2^{-0.125n}$.
	From \eqref{eq-1111},
	for each pair of nearest neighbors $u,v\in\G_n$, we have
	$$
	\PP (|Y(u) - Y(v)|> L2^{-0.125n}) \leq\alpha_1 \exp\left\{-\alpha_22^{0.25n} \right\}.
	$$
	Because there are $2^n$ nearest neighbors in $\G_n$, one gets
	$$
	\PP((\mathcal D(n))^c) \leq 2^{n}\alpha_1\exp\left\{-\alpha_22^{0.25n}\right\}\leq
	C_1\alpha_1\exp\left\{-C_2\alpha_22^{0.25n}\right\},
	$$
	for some positive constants $C_1,C_2$, independent of $n$.
	Hence, let $\mathcal D= \cap_{n=0}^\infty \mathcal D(n)$ and summing the previous estimates over $n$, we have
	$$\PP(\mathcal D^c) \leq C_1\alpha_1\exp\left\{-C_2\alpha_2\right\},$$
	where $C_1,C_2$ may be different than before.
	Moreover, in the event $\mathcal D$ one has that for any $u\in\cup_{n=0}^\infty\G_n$, there is a path $0=v_1;v_2;\dots;v_N=u$ with $v_{i-1},v_i$ are nearest neighbors in some $\G_n$ and then,
	$$
	|Y(u)|\leq \sum_{n=1}^N|Y(v_{n-1})-Y(v_n)|\leq\sum_{n=1}^\infty L2^{-0.125n}\leq C_3L.
	$$
	Therefore, we
	conclude the proof of Lemma \ref{lm-1111}.
\end{proof}

\begin{proof}[Proofs of Lemmas {\rm\ref{lm-Hphi}} and {\rm\ref{lm-Dphi}}]
	A standard calculation shows that
	\begin{equation}
	\begin{aligned}
	H_\eps^\varphi(t)-H_\eps^\varphi(s)=&\sqrt{\eps} e^{-A_\eps^\varphi(t)}\int_0^t e^{A^\varphi_\eps(r)}\sigma_\eps(r,\varphi_r)dw(r)-\sqrt{\eps} e^{-A_\eps^\varphi(s)}\int_0^s e^{A^\varphi_\eps(r)}\sigma_\eps(r,\varphi_r)dw(r)\\
	=&\sqrt{\eps} \int_s^t e^{-A^\varphi_\eps(t,r)}\sigma_\eps(r,\varphi_r)dw(r)-\sqrt{\eps}(1-e^{-A_\eps^\varphi(t,s)}) \int_0^s e^{-A^\varphi_\eps(s,r)}\sigma_\eps(r,\varphi_r)dw(r).
	\end{aligned}
	\end{equation}
	As used often in this paper, the first stochastic integral is an element of a sequence of martingales with quadratic deviation bounded by
	$C\eps^3(1-e^{-\frac{\kappa_0(t-s)}{\eps^2}})$, then using the fact $1-e^{-u}\leq\sqrt u,\forall u>0$ that is bounded
	by
	$
	C\eps^2\sqrt{|t-s|}.
	$
	Similarly,
	by using the fact $(1-e^{-u})^2\leq\sqrt u,\forall u>0$, the second stochastic integral is an element of a sequence of martingales with quadratic deviation bounded by the
	$C\eps^2\sqrt{|t-s|}.$
	Therefore, an application of exponential martingale inequality  \cite[Theorem 7.4, p. 44]{Mao97} allows us to obtain that $\forall t,s\in [0,1]$,
	\begin{equation}\label{eq-Hphi-ts}
	\PP\left\{|H_\eps^\varphi(t)-H_\eps^\varphi(s)|>\ell\right\}\leq \exp\left\{-\frac{C\ell^2}{\eps^2|t-s|^{\frac 12}}\right\},
	\end{equation}
	where $C$ is some finite constant, independent of $\ell,\eps$.
	With this property, the technique and argument to obtain \eqref{eq-Hphi} is similar to that of Lemma \ref{lm-1111}.
	Similarly, the proof of Lemma \ref{lm-Dphi} is obtained
	by using exponential martingale inequality  \cite[Theorem 7.4, p. 44]{Mao97}.
\end{proof}

\newpage

\

\

\para{Data Availability Statements.}
Data sharing is not applicable to this article as no new data were created or analyzed in this study.

\end{document}